\newcommand{\R}{I\!\!R}
\newtheorem{theorem}{Theorem}[section]
\newtheorem{lemma}{Lemma}[section]
\newtheorem{proposition}{Proposition}[section]
\newtheorem{corollary}{Corollary}[section]
\newtheorem{remark}{Remark}[section]
\title{ Lack of contact in a lubricated system }
\author{Ionel Sorin Ciuperca$^1$ and Jos\'e Ignacio Tello$^2$}
\begin{document}

\maketitle

\begin{flushleft}

$^1$ Universit\'e de Lyon, CNRS, Universit\'e Lyon 1, Institut Camille
Jordan UMR5208, B\^at Braconnier, 43 Boulevard du 11 Novembre 1918,
F-69622, Villeurbanne, France.

\vspace{2mm}

$^2$  Matem\'atica Aplicada. E.U.I. Inform\'atica. Universidad Polit\'ecnica de Madrid. 28031 Madrid. Spain

\end{flushleft} 

\begin{abstract}

We consider the  problem of a rigid surface moving over a flat plane.
The surfaces are separated by a small gap
 filled by a lubricant fluid. The relative position of the 
surfaces is unknown except for the
initial time $t=0$. The total load applied over the upper surface  is a know constant for $t>0$.
The mathematical model consists in a
coupled system formed by Reynolds variational inequality for 
incompressible fluids and Newton$^{\prime}$s second Law.
 In this paper we study  the global existence  and uniqueness
 of solutions of the evolution problem when the position of the surface
 presents only one  degree of freedom, under extra assumptions on its geometry.
The existence of steady states is also studied.
\end{abstract}

{\small \underline{Key words}: lubricated systems, 
Reynolds variational inequality,  
global solutions, sationnary solutions.

\underline{A.M.S. subject classification}. 35J20, 47H11, 49J10. }

\section{Introduction}

Lubricated contacts are widely used in mechanical systems
to connect solid
bodies that are in relative motion. A lubricant fluid is introduced
in the narrow space between the bodies with the purpose of avoiding
direct solid-to-solid contact.
\\
This contact is said to be in the hydrodynamic regime, and the
forces transmitted between the bodies result from the shear and
pressure forces developed in the lubricant film.

We consider one of the
simplest lubricated systems which consists of two rigid surfaces   in
hydrodynamic contact. The bottom surface, assumed planar and
horizontal moves with a constant horizontal translation velocity
and a vertical given force $ F > 0 $ is applied
vertically on the upper body.

The wedge between the two surfaces is filled with an incompressible
fluid. We suppose that the wedge satisfy the thin-film hypothesis,
so that a Reynolds-type model can be used to  describe the problem.

We denote by $\Omega$ the two-dimensional domain in which the
hydrodynamic contact occurs. We assume
 that $\Omega$ is open,
bounded and with regular boundary $\partial \Omega$.
Without lost of generality we consider  $ 0 \in \Omega$.
We assume that the upper body, the {\it slider},
 is allowed to move only by vertical translation. The normalized
distance between the surfaces is given by
$$
h(x,t) = h_0(x) + \eta(t)
  $$
where $ \eta(t) > 0 $ represents the vertical translation of the
slider and
\\
$h_0: \Omega \longrightarrow [0, \infty[$ describes the shape
of the slider and
is a given function
satisfying
\begin{equation}
\label{hyp-base-h0}
h_0 \in C^1({\bar \Omega}), \quad
\min_{x \in \Omega} h_0(x)=h_0(0)=0.
\end{equation}
%
The mathematical model we study considers the possible cavitation in the
thin film, so
the (normalized) pressure ``$p$'' of the fluid satisfies the Reynolds
variational inequality (see \cite{frene}):
 \begin{equation}
\int_\Omega h^3 \nabla p \cdot \nabla (\varphi-p) \geq \int_\Omega h
\dfrac{\partial }{\partial x_1} (\varphi-p) -
\eta^{\prime}(t)\int_\Omega (\varphi-p), \ \  \forall \varphi \in
K \label{2}
\end{equation}
where
$$
K =  \left\{ \varphi \in H^1_0(\Omega): \varphi \geq 0 \right\},
$$
and ``$\nabla$'' denotes the gradient
with respect to the variables $ x \in \Omega$.
 Without lost of generality we assume the velocity of the bottom surface is oriented in the
direction of the $x_1$ - axis and its normalized value is equal to 1.
\\
The equation of motion of the slider is
\begin{equation}
\eta^{\prime \prime} =\int_{\Omega} pdx -F \qquad \mbox{ (second
Newton Law)} \label{3}
\end{equation}
completed with the initial conditions:
\begin{equation}
\eta(0) =\eta_0 \label{4}
\end{equation}
\begin{equation}
\eta^{\prime }(0) =\eta_1,\label{5}
\end{equation}
where  $\eta_0>0$,  $\eta_1\in \R$ are given data.
\newline
The unknowns of the problem are the pressure $p(x,t)$ and the
vertical displacement of the slider  $\eta(t)$.
It is known that for any given $C^1$ function $\eta(t)$ the problem
\eqref{2} is well posed (see for instance \cite{kinder}).
The system (\ref{2})-(\ref{5}) is equivalent to the
following Cauchy problem for a second order ordinary differential
equation in $\eta$:
\begin{equation}
\left\{
\begin{array}{l}
\eta^{\prime \prime}=G(\eta, \eta^{\prime})
\\
 \eta^{\prime }(0)
=\eta_1,
\\ \eta(0) =\eta_0,
\end{array}
\right. \label{6}
\end{equation}
where $G:]0, \infty[ \times \R \longrightarrow  \R$ is given by
$$G(\beta, \gamma):=\int_{\Omega}q(x)dx -F,$$
and $q\in K$ (depending on $\beta$ and  $\gamma$) is the unique
solution to
\begin{equation}
\left\{
\begin{array}{l} \displaystyle
\int_\Omega\left (h_0 + \beta \right )^3 \nabla q \cdot \nabla
(\varphi-q) \geq \int_\Omega h_0
\dfrac{\partial }{\partial x_1} (\varphi-q) - \gamma
\int_\Omega (\varphi-q) \\ \\  \forall \varphi \in K.
\end{array}
\right. \label{7}
\end{equation}
The main goal of the paper is to give sufficient conditions on the shape $h_0$
of the slider to obtain global existence on time to \eqref{6}, i.e.  there is no contact solid-to-solid for $t< \infty$.
We also study the existence of steady states  of the problem.
Another interesting physical question
which we adress here  is to see if there exists a
``barrier'' value $\eta_b > 0$ such that
$ \eta(t) \geq \eta_b, \ \ \forall \ t > 0 $.
\\
We prove the existence of $\eta_b$ for two of the three cases studied. Third case (the so called ``flat case''),
we prove that $\eta$ tends to 0 as $t \rightarrow \infty.$
\\
The main ideas of these results are the following:   when the distance between the surfaces decreases
(i.e. $\eta'\leq 0$)  there exists a lower bound of the force exerted by the
pressure of the fluid on the upper body. This lower bound
admits an expression of the form
$F_S + F_D$, where $F_S$ is a ``spring-like'' force
and $F_D$ is a ``damping force'' (see Corollary (\ref{cor1})
and  Remark \ref{rem3.1}).

$F_S$ depends only on the position $\eta(t)$ and
represents the force exerted by the pressure of the fluid
for the stationary position in an auxiliary  sub-domain $U$ of $\Omega$.

 $F_D$ is of the form $ F_D = - \eta' d $ where $d$ is
a ``dumping'' coefficient and depends only on $\eta$.
The global   existence of the solution $\eta$ is a consequence
of the  velocity of blow up of $d$
 when $\eta$ tends to 0.
The
 existence of a ``barrier'' $\eta_b$ is based on the fact
that $F_S$  blows up when $\eta$ tends to 0.
In the ``flat case'' the force $F_S$ is equal to zero, which
explains the non existence of a barrier.

The present work is related to different articles on the fluid-rigid
interaction problems (see for example \cite{conca}, \cite{desjardins},
\cite{ger-var-hill}, \cite{hesla} and \cite{hill-07}, for a
non-exhaustive bibliography on this subject).
These papers concern the study of the motion of one or many rigid
bodies inside a domain
$ Q \in \R^n, n = 2, 3 $, filled with an
incompressible fluid with constant viscosity. The mathematical
model is a coupled system between Navier-Stokes equations modeling  the fluid
and second Newton Law to describe the rigid bodies positions.
A relevant problem in this context is the so called
``non-collision'' problem, where the question is to know if this body will touch the
boundary $\partial Q$ of the fluid in finite time.
\\
In \cite{hill-07} Hillairet consider the particular case where $Q$ is the
half-plane $ \R \times \R_+ $ and the rigid body is a disk
which moves
only along the vertical axis.
He proves that in absence of external forces the solution is defined globally in time. He also shows that the disk
remains all the time ``far'' from the boundary.
\\
In \cite{ger-var-hill} G\'erard-Varet  and Hillairet consider a more general
shape of the rigid body in a general domain $Q$ in presence of gravity.
 They prove the existence of a
global in time solution of the problem, but now
the rigid body can go the boundary of the domain as $t$ goes to infinity.
Similar results are given by Hesla in \cite{hesla}.

The main difference between the above mentioned
works and the present one is
the obtention  in this study of a ``barrier'' value
$\eta_b > 0$ for any exterior force $F$. We can explain this
difference by the high shear and pressure that develop in a lubricant
fluid film, due especially to the relative motion of the closed
surfaces.
An interesting open question is to see if similar ``barrier'' results
can be obtained
for situations
when the thin film hypothesis is not satisfied in the fluid
(so the  full Navier-Stokes equations must be  used in the place of
Reynolds models),  but relative horizontal motion exists between the
two surfaces.

Fluid-rigid interaction problems in lubrication where also considered
in \cite{diaztello} where Reynolds equation is used in the place
of Reynolds variational inequality in the particular
``flat'' case. We also mention the papers  \cite{busciuhafjai07a},
\cite{busciuhafjai07b} and
\cite{ciujaitel1}, where
the existence of steady states is studied for lubricated
systems with two degrees of freedom.

The contents of the paper are the following:
\\
In Section \ref{s2} we  precise the hypothesis on $h_0$
and present the main results of the paper. In Section \ref{s3} we give
some preliminary results and Section \ref{s4} is devoted to the proof
of the theorems of Section \ref{s2}.

\section{Main results}
\label{s2}
We begin by the local in time existence and uniqueness result, for
which the minimal hypothesis \eqref{hyp-base-h0} is sufficient.
\begin{theorem}
\label{G-lipsch}
The function $G$ is locally Lipschitzian, so
we have the existence and uniqueness of solution to (\ref{6})  locally in time.
\label{t1}
\end{theorem}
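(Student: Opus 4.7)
The plan is to derive the local Lipschitz estimate for $G$ directly from the variational inequality \eqref{7} by the standard duality trick of testing each inequality with the other's solution. Since $G(\beta,\gamma)=\int_\Omega q\,dx - F$, it suffices to show that $(\beta,\gamma) \mapsto q$ is locally Lipschitz from $]0,\infty[\times\mathbb{R}$ into $L^1(\Omega)$ (in fact into $H^1_0(\Omega)$). Once this holds, the Cauchy--Lipschitz theorem applied to the first-order system associated with \eqref{6} gives local existence and uniqueness.

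Fix a compact set $K^\ast = [\beta_0,\beta^\ast]\times[-\Gamma,\Gamma]\subset\ ]0,\infty[\times\mathbb{R}$. As a first step I would establish a uniform a priori estimate: for $(\beta,\gamma)\in K^\ast$ with associated solution $q\in K$, test \eqref{7} with $\varphi=0\in K$. Using Cauchy--Schwarz, Poincar\'e's inequality, and the lower bound $(h_0+\beta)^3\ge\beta_0^3$, one obtains
\begin{equation*}
\beta_0^3\,\|\nabla q\|_{L^2(\Omega)}^2 \le C\bigl(1+|\gamma|\bigr)\|\nabla q\|_{L^2(\Omega)},
\end{equation*}
hence $\|\nabla q\|_{L^2(\Omega)}\le M$ uniformly on $K^\ast$, with $M$ depending only on $\beta_0$, $\Gamma$, $h_0$ and $\Omega$.

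Next, given two pairs $(\beta_i,\gamma_i)\in K^\ast$ with solutions $q_i\in K$, I would take $\varphi=q_2$ in the inequality for $q_1$ and $\varphi=q_1$ in the inequality for $q_2$, then add. Setting $w=q_1-q_2$ and $h_i=h_0+\beta_i$, the $h_0\,\partial_{x_1}$ terms cancel, and the sum can be rearranged as
\begin{equation*}
\int_\Omega h_1^3 |\nabla w|^2 \le \int_\Omega (h_2^3 - h_1^3)\,\nabla q_2\cdot\nabla w \;-\;(\gamma_1-\gamma_2)\int_\Omega w.
\end{equation*}
Since $|h_1^3-h_2^3|\le C|\beta_1-\beta_2|$ on $K^\ast$, Cauchy--Schwarz together with the a priori bound on $\|\nabla q_2\|_{L^2}$ and Poincar\'e's inequality give
\begin{equation*}
\beta_0^3\,\|\nabla w\|_{L^2(\Omega)} \le C\bigl(|\beta_1-\beta_2| + |\gamma_1-\gamma_2|\bigr).
\end{equation*}
Combined with $|G(\beta_1,\gamma_1)-G(\beta_2,\gamma_2)|\le |\Omega|^{1/2}\|w\|_{L^2}\le C\|\nabla w\|_{L^2}$, this yields the local Lipschitz property of $G$.

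The delicate point is the second step: the test-function manipulation only works because $K$ is a convex cone containing both $q_1$ and $q_2$, so $q_j$ is admissible in the inequality for $q_i$; the one-sided nature of \eqref{7} forces us to add rather than subtract, and one must be careful that the leading dissipative term $-\int h_1^3|\nabla w|^2$ appears with the correct sign after rearrangement. The uniform lower bound $h_i\ge\beta_0>0$ is essential here --- the estimate degenerates as $\beta\to 0^+$, which is why only a \emph{local} Lipschitz property (and hence only \emph{local} existence) is obtained, and why the global existence results of later sections require the additional geometric hypotheses on $h_0$.
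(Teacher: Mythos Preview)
Your proof is correct and follows essentially the same approach as the paper: test each variational inequality with the other solution, add, and use the coercivity $(h_0+\beta)^3\ge\beta_0^3$ together with Poincar\'e to control $\|\nabla(q_1-q_2)\|_{L^2}$ by $|\beta_1-\beta_2|+|\gamma_1-\gamma_2|$. The paper's version differs only cosmetically---it first rewrites the inequality for $\tilde q$ with coefficient $(h_0+\beta)^3$ and absorbs the discrepancy into a bounded factor $A_{\beta,\tilde\beta}$, whereas you keep the two coefficients and extract $(h_2^3-h_1^3)\nabla q_2\cdot\nabla w$ directly; your explicit a~priori bound on $\|\nabla q\|_{L^2}$ over a compact set makes transparent a step the paper leaves implicit.
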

Let $[0,T[$ be the maximal interval of existence of solution to \eqref{6},
 so $\eta \in C^2([0,T[)$.
\\
The main goal of the paper is to prove  that $ T = + \infty $.
\newline
It is equivalent to prove that for any  fixed $T>0$ there
exists $m>0$ and $M>0$ (depending eventually on $T$) such that
\begin{equation}
\left\{
\begin{array}{ll}
m \leq \eta (t) \leq M, &  \mbox{ for all } t \in [0,T[  \\
 |\eta ^{\prime} (t) | \leq M. &
\end{array}
\right. \label{8}
\end{equation}
Moreover, we are interested to know  if there exists such  constants $m$
and $M$ independent on  $T$.
%

In order to study the existence of steady states and global existence of
solutions to \eqref{6}
we consider three different cases depending on the shape of the
slide $h_0$.
\newline
{\bf Case I. Line contact} \newline
We assume that $h$ is equal to $0$ only in the line $\{x_1=0\}$
i.e.
$$
 \left\{
\begin{array}{l}
h_0(0,x_2)=0   \ \mbox{ for all } x_2 \in \R \mbox{ such that } (0,x_2) \in \Omega \mbox{ and }  \\
h_0(x_1,x_2)>0 \ \mbox{ for all } (x_1, x_2) \in  \Omega, \  x_1
\neq 0.
\end{array}
\right.
  $$
We also assume that there exists $\alpha \geq 1 $ such that
\begin{equation}
h_0(x_1,x_2) \sim |x_1|^{\alpha}  \mbox{ when } x_1 \rightarrow 0.
\label{9}
\end{equation}
More precisely there exists a neighborhood $W$ of $0$ and a
function $h_1$ regular enough on the closure $ \bar{W}$
of $W$ with
$ h_1 > 0 $ on $ \bar{W}$ such that
$$
h_0(x_1,x_2) = |x_1|^{\alpha} h_1(x_1,x_2) \quad \text {in} \quad  W.
  $$
\newline
{\bf Case II. Point contact}
\newline
We assume that $h_0$ is equal to $0$ only in the point $\{x=0\},$
i.e.
$$h_0(0)=0 \mbox{ and } h_0(x)>0 \mbox{ for all } x \in \Omega -
\{0\}.$$
We also assume that there exists $ \alpha \geq 1 $ such that
\begin{equation} h_0(x) \sim
|x|^{\alpha} \mbox{  when } x \rightarrow 0 \label{10}
\end{equation}
that is, there exist $W$ and $h_1$ as in {\bf Case I} such that
$$
h_0(x)=|x|^{\alpha} h_1(x) \quad \text{in} \quad  W
  $$
(where $|\cdot|$ is the
euclidian norm in $\R^2$).
\newline
{\bf Case III. Flat slides}
\newline
We  assume that  $h$ is flat, i.e.
\begin{equation}
\label{h0e0}
h_0=0 \quad \text{ on } \; \Omega
\end{equation}
which implies
$h(x,t)=\eta(t)$.
\\

The results concerning the existence of steady states for cases I and II are enclosed in the following theorem:
\begin{theorem}
\label{th-ex-station}
Let $h_0$ satisfy assumption \ref{9} in case I  or \ref{10} in case II for $\alpha$ satisfying
\begin{equation}
\begin{cases}
\alpha > 1 & \quad \text{ in Case I (line contact) }
\\
\alpha > \frac 3 2 & \quad \text{ in Case II (point contact) }.
\end{cases}
\end{equation}
Then there exists at least one stationary solution
$\bar{\eta} > 0 $ of the Cauchy problem \eqref{6}, i.e.
$$
G(\bar{\eta}, 0) = 0.
   $$
\end{theorem}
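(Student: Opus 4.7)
The plan is to view $\beta \mapsto G(\beta,0)$ as a continuous function on $(0,\infty)$ and to produce a stationary solution by the intermediate value theorem. Since $G(\beta,0) = \int_\Omega q_\beta\,dx - F$, where $q_\beta \in K$ is the unique solution of the stationary Reynolds variational inequality
\begin{equation*}
\int_\Omega (h_0+\beta)^3\nabla q_\beta\cdot\nabla(\varphi-q_\beta) \ \geq\ \int_\Omega h_0\,\frac{\partial(\varphi-q_\beta)}{\partial x_1},\qquad \forall\,\varphi\in K,
\end{equation*}
the question reduces to the asymptotic behaviour of $\int_\Omega q_\beta\,dx$ as $\beta\to 0^+$ and $\beta\to+\infty$.

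First I would establish continuity of $\beta\mapsto G(\beta,0)$ on $(0,\infty)$. This is a routine continuous dependence argument for variational inequalities: testing with $\varphi = q_{\beta'}$ in the inequality for $q_\beta$ and vice versa, one obtains an $H^1_0$ bound on $q_\beta-q_{\beta'}$ in terms of $|\beta-\beta'|$, since $(h_0+\beta)^3$ is uniformly bounded below on $\bar\Omega$ by $\beta^3>0$. Continuity in $L^1(\Omega)$ then follows. This continuity is essentially the same as (indeed a special case of) Theorem \ref{G-lipsch}, restricted to $\gamma=0$.

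Next, I would analyse the limit $\beta\to+\infty$. Using $\varphi=0$ as a test function gives an a priori bound $\|(h_0+\beta)^{3/2}\nabla q_\beta\|_{L^2}\le C$ independent of $\beta$, hence $\|\nabla q_\beta\|_{L^2}\le C\beta^{-3/2}$, and by Poincar\'e $\int_\Omega q_\beta\,dx\to 0$. Therefore $G(\beta,0)\to -F<0$ as $\beta\to+\infty$.

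The crucial step is to show $G(\beta,0)\to +\infty$ as $\beta\to 0^+$. This is where the hypotheses $\alpha>1$ (Case I) and $\alpha>3/2$ (Case II) enter. The strategy, in line with the heuristic description of the ``spring-like'' force $F_S$ sketched in the introduction, is to construct in the neighbourhood $W$ of $0$ a suitable subsolution or to apply the preliminary lower bounds from Section \ref{s3}: on an auxiliary subdomain $U\subset\Omega$ adapted to the geometry near the contact set, one bounds $\int_\Omega q_\beta\,dx$ from below by an explicit quantity of the form $C\,I_\alpha(\beta)$, where $I_\alpha(\beta)$ is an integral of the type $\int h_0/(h_0+\beta)^3$ (up to a harmless factor), which diverges as $\beta\to 0$ precisely when the exponents satisfy $\alpha>1$ in the one-dimensional (line contact) case and $\alpha>3/2$ in the two-dimensional (point contact) case. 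The main obstacle is this blow-up estimate: one must carefully exploit the local behaviour $h_0\sim|x_1|^\alpha$ or $|x|^\alpha$ to build a test pressure (or use the $F_S$ lower bound from Section \ref{s3}) that produces the divergent lower bound with the sharp exponent thresholds. Once these two limits are known, the intermediate value theorem applied to the continuous function $G(\cdot,0)$ yields at least one $\bar\eta>0$ with $G(\bar\eta,0)=0$.
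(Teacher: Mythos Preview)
Your proposal is correct and follows exactly the paper's route: define $g(\beta)=G(\beta,0)$, use the Lipschitz property of $G$ (Theorem~\ref{G-lipsch}) for continuity, Lemma~\ref{ineq-G-1}\,$i)$ for $g(\beta)\to -F$ as $\beta\to+\infty$, and the lower bound of Corollary~\ref{c2} (the $F_S$ estimate from Section~\ref{s3}) for $g(\beta)\to+\infty$ as $\beta\to 0^+$, then apply the intermediate value theorem. One minor slip: the bound $\|(h_0+\beta)^{3/2}\nabla q_\beta\|_{L^2}\le C$ is not uniform in $\beta$ as written, but the crude estimate $\|\nabla q_\beta\|_{L^2}\le C\beta^{-3}$ obtained the same way already suffices for the $\beta\to+\infty$ limit.
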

\begin{remark}
The problem of uniqueness of the stationary solution is a difficult one.
In \cite{busciuhafjai07b} the authors proved the uniqueness of solutions to the
1-dimension problem  for a particular  function $h_0$.
\end{remark}
Results of global existence and barrier functions are presented  in the following theorem:
\begin{theorem}
\label{th-ev}
We assume that $h_0$
 satisfy assumption \ref{9} in case I  or \ref{10} in case II for $\alpha$ satisfying
\begin{equation}
\begin{cases}
\alpha \geq  \frac 3 2 & \quad \text{ in Case I (line contact) }
\\
\alpha \geq  2 & \quad \text{ in Case II (point contact) },
\end{cases}
\end{equation}
then $ T = + \infty $.
Moreover, there exist constants $m_0, M_0$ and $M_1$ such that
$ 0 < m_0 \leq  M_0 $ and $M_1 \geq 0 $ satisfying
$ \forall t \geq 0 $:
\begin{equation}
\nonumber
\begin{cases}
m_0 \leq \eta(t) \leq M_0
\\
|\eta'(t) | \leq M_1,
\end{cases}
\end{equation}
for $t\geq 0$.
\end{theorem}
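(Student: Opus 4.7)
The aim is to establish a priori bounds on $\eta$ and $\eta'$ that are uniform in $T$; combined with the local existence in Theorem~\ref{G-lipsch}, these will imply $T=+\infty$ together with the quantitative estimates stated. The strategy is to construct a Lyapunov functional
\[
E(t):=\tfrac12\bigl(\eta'(t)\bigr)^2+F\,\eta(t)+V\bigl(\eta(t)\bigr),
\]
where $V$ is an antiderivative of $-F_S$ chosen so that $V(\eta)\to+\infty$ as $\eta\to 0^+$. Such a normalization is feasible precisely under the exponent thresholds $\alpha\geq 3/2$ (Case~I) and $\alpha\geq 2$ (Case~II), for which the asymptotic analysis of Section~\ref{s3} produces a non-integrable singularity of $F_S$ at $\eta=0$. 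Using Newton's equation \eqref{3} and the chain rule, one obtains
\[
\dot E(t)=\eta'(t)\Bigl(\textstyle\int_\Omega p(x,t)\,dx-F_S(\eta(t))\Bigr).
\]

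Descent and ascent phases are treated separately. On intervals where $\eta'(t)\leq 0$, Corollary~\ref{cor1} supplies the lower bound $\int_\Omega p\,dx\geq F_S(\eta)-\eta'\,d(\eta)$, whence $\dot E\leq -d(\eta)(\eta')^2\leq 0$. On intervals where $\eta'(t)>0$ the lower bound of Corollary~\ref{cor1} is silent, so $\dot E$ must be controlled from the variational inequality \eqref{2} itself. Testing \eqref{2} successively with $\varphi=2p$ (admissible since $p\in K$) and $\varphi=0$ yields the energy identity
\[
\eta'(t)\int_\Omega p\,dx\ +\ \int_\Omega h^3|\nabla p|^2\,dx\ =\ -\int_\Omega (\partial_{x_1}h_0)\,p\,dx.
\]
Applying Cauchy--Schwarz, the Poincar\'e inequality, and the pointwise bound $h\geq\eta$, one deduces $\int_\Omega p\,dx\leq C_0/\eta(t)^3$ on ascent intervals. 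Consequently $\dot E(t)\leq C_0\,\eta'(t)/\eta(t)^3=\tfrac{d}{dt}\bigl(-C_0/(2\eta(t)^2)\bigr)$, so the augmented functional $\widetilde E(t):=E(t)+C_0/(2\eta(t)^2)$ is nonincreasing on ascent intervals. A balance between the extra contribution this term brings on descent and the strong damping $-d(\eta)(\eta')^2$---both blow up as $\eta\to 0$, at rates controlled by the sharp exponents in the hypotheses on $\alpha$---then closes the argument and yields a uniform bound on $\widetilde E$.

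From $\widetilde E(t)\leq\widetilde E(0)$ all the stated estimates follow at once: the coercivity of $V$ at $0$ gives $\eta(t)\geq m_0>0$; the gravitational term yields $\eta(t)\leq M_0$; and the kinetic term gives $|\eta'(t)|\leq M_1$. Since these bounds are independent of $T$, Theorem~\ref{G-lipsch} extends the solution to all of $[0,+\infty)$.

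The principal obstacle is the ascent-phase estimate: Corollary~\ref{cor1} has no direct analogue when the slider rises, and control must come from the variational inequality. The ``double-test'' identity produced by $\varphi=2p$ and $\varphi=0$ is the key tool, reducing ascent-phase control to an integral of $\eta'/\eta^3$, which a suitably strengthened spring potential---whose construction is only possible under the sharpened assumptions on $\alpha$---is designed to absorb.
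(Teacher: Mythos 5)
Your proposal has a genuine gap, and the core of the gap is a misidentification of which exponent drives the lower bound on $\eta$.

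You build $V(\eta)$ as an antiderivative of $-F_S$ and claim that the hypotheses $\alpha\geq 3/2$ (Case I), $\alpha\geq 2$ (Case II) are precisely what make $V$ coercive, i.e.\ what make $F_S$ non-integrable at $\eta=0^+$. This is false. Lemma~\ref{l3} yields $F_S(\eta)\gtrsim \eta^{-s_1}$ with $s_1=2(1-1/\alpha)$ in Case~I and $s_1=2-3/\alpha$ in Case~II. At $\alpha=3/2$ this gives $s_1=2/3$, and at $\alpha=2$ in Case~II it gives $s_1=1/2$; in the whole range $\alpha\in[3/2,2)$ (resp.\ $[2,3)$) one has $s_1<1$, so $\eta^{-s_1}$ is \emph{integrable} near $0$ and $V$ is bounded there. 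The spring potential therefore does not blow up and cannot supply the lower bound on $\eta$ via coercivity. The thresholds in the theorem in fact come from the \emph{damping} exponent $s_2$ defined in \eqref{s-2}: $s_2\geq 0$ is equivalent to $\alpha\geq 3/2$ in Case~I and $\alpha\geq 2$ in Case~II, and the paper's Proposition~\ref{m-4} exploits exactly this. The spring force is used there only to show that below the threshold $D_3$ (chosen so that $c_3\eta^{-s_1}>F$, which needs merely $s_1>0$) the trajectory has no interior local maximum; once the descent phase $[t_1,t_2]$ is isolated, one drops the spring and gravity contributions and integrates the damping inequality $\eta''\geq -c_4\,\eta'\,\eta^{-1-s_2}$ directly, producing a bound on $\eta(t_2)$ from the a priori bound on $\eta'(t_1)$. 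No Lyapunov potential for the spring is needed, and none would work in the stated exponent range.

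A second, independent problem is the "balance" you invoke for the augmented functional $\widetilde E = E + C_0/(2\eta^2)$. You show $\widetilde E$ is nonincreasing on ascent intervals, but on descent intervals $\frac{d}{dt}\bigl(C_0/(2\eta^2)\bigr)=-C_0\,\eta'/\eta^3\geq 0$, and the damping dissipation $-d(\eta)(\eta')^2$ vanishes as $\eta'\to 0$ and hence cannot dominate $-C_0\,\eta'/\eta^3$ uniformly; making $\widetilde E$ monotone would require $-\eta'\gtrsim \eta^{-3/\alpha}$ near $\eta=0$, which cannot be assumed and is in fact contradicted by the a priori bound $|\eta'|\leq M_1$ you are trying to prove. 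The paper sidesteps this entirely by separating the estimates: Propositions~\ref{m-1} and \ref{m-3} bound $\eta'$ above and below using Lemma~\ref{ineq-G-1}~$ii)$ and the energy $E_1$, Proposition~\ref{m-2} bounds $\eta$ above via the energy $E_2=\frac12\gamma^2+F\beta+c_1/(2\beta^2)$ (which is nondecreasing on ascent), and Proposition~\ref{m-4} then handles the lower bound on $\eta$ as described above, using the previously obtained bound on $\eta'$. Your ascent-phase estimate $\int_\Omega p\,dx\leq C_0/\eta^3$ (essentially Lemma~\ref{ineq-G-1}~$i)$) is correct and is the right ingredient for the upper bound on $\eta$, but the attempt to weld ascent and descent into a single monotone $\widetilde E$ is the wrong route here.
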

\begin{remark}
For the one-dimensional problem, i.e.
$\Omega$ is an interval of $\R$,
the results are the same than in the case II (line contact) for the two-dimensional
problem.
\end{remark}
Some relevant questions  concerning the dynamical
system \eqref{6} remain open:
\\
- Uniqueness of solution for the steady states.
\\
- Stability of the steady states.
\\
- Existence of periodic solutions.
\\
- The Attractor of the dynamical system.
\begin{theorem}
\label{th-ev-fc}
We assume that $h_0 \equiv 0$ (Case III), then $ T = + \infty $,
moreover there exist $ M_0, M_1 > 0 $ such that
\begin{equation}
\nonumber
\begin{cases}
0 < \eta(t) \leq M_0
\\
|\eta'(t) | \leq M_1,
\end{cases}
\end{equation}
for $  t \geq 0 $ and
$$
\eta(t) \rightarrow 0 \quad \text{ as } \quad t \rightarrow + \infty.
  $$
Moreover
there exist $t_0 \geq 0, \;  a > 0 $
and $b \in \R$ with $t_0 + b > 0$, such that
$$
\eta(t) \geq \frac a {\sqrt{t + b}} \quad  \quad \forall \, t \geq t_0.
  $$
In addition,  no stationary solution
exist for the system \eqref{6}.
\end{theorem}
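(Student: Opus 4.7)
The plan is to exploit the drastic simplification of the variational inequality in the flat case, reducing the coupled system to a scalar second-order ODE and then exhibiting a single monotone quantity that drives global existence, the $1/\sqrt{t}$ barrier, and the decay to zero simultaneously. With $h(x,t)=\eta(t)$ constant in $x$, testing \eqref{2} against $\varphi=0\in K$ yields $\eta^3\int_\Omega|\nabla p|^2+\eta'\int_\Omega p\le 0$. When $\eta'(t)\ge 0$ this forces $p\equiv 0$; when $\eta'(t)<0$ the obstacle is inactive and $p$ is given explicitly by $p=-\frac{\eta'(t)}{\eta(t)^3}u$, where $u\in H^1_0(\Omega)$ is the fixed positive function satisfying $-\Delta u=1$. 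Setting $C_0:=\int_\Omega u>0$, Newton's law \eqref{3} becomes
$$\eta''(t)=-F-C_0\,\frac{\eta'(t)}{\eta(t)^3}\,\mathbf{1}_{\{\eta'(t)\le 0\}}.$$
The same computation applied to \eqref{7} with $\gamma=0$ forces $q\equiv 0$, so $G(\bar\eta,0)=-F<0$ for every $\bar\eta>0$ and no stationary solution exists.

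Multiplying the reduced ODE by $\eta'$ shows that the energy $E:=\tfrac12(\eta')^2+F\eta$ is non-increasing, which yields at once the uniform bounds $\eta(t)\le E(0)/F=:M_0$ and $|\eta'(t)|\le\sqrt{2E(0)}=:M_1$. Moreover, the right-hand side of the ODE is continuous in $\eta'$ and equals $-F<0$ whenever $\eta'=0$, so $\eta'$ can only cross zero from above; hence there is a single transition time $t_*=\max(0,\eta_1/F)$ such that $\eta'\ge 0$ on $[0,t_*]$ and $\eta'\le 0$ on $[t_*,T)$, and $\eta$ is non-increasing on $[t_*,T)$.

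The heart of the argument is the auxiliary quantity $\psi(t):=\eta'(t)-\tfrac{C_0}{2\eta(t)^2}$. A direct computation gives $\psi'(t)=-F$ on $\{\eta'\le 0\}$ and $\psi'(t)=-F+C_0\eta'/\eta^3\ge -F$ on $\{\eta'\ge 0\}$, so $\psi(t)\ge\psi(0)-Ft$ for every $t\ge 0$. Rearranging and using $\eta'(t)\le M_1$ yields
$$\frac{C_0}{2\eta(t)^2}\le Ft+M_1-\psi(0).$$
On any finite interval this keeps $\eta$ uniformly bounded away from zero, precluding finite-time contact and giving $T=+\infty$; and for $t\ge t_0$ chosen so that $t_0+b>0$ it supplies the required barrier $\eta(t)\ge a/\sqrt{t+b}$ with $a=\sqrt{C_0/(2F)}$ and $b=(M_1-\psi(0))/F$.

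Finally, to obtain $\eta(t)\to 0$, note that on $[t_*,\infty)$ the monotone bounded function $\eta$ converges to some $\ell\ge 0$. Since $E$ is non-increasing and $F\eta\to F\ell$, the limit of $(\eta')^2$ exists; combined with $\eta'\le 0$ and boundedness of $\eta$ this forces $\eta'(t)\to 0$. If $\ell>0$, the reduced ODE would then give $\eta''(t)\to -F$, contradicting boundedness of $\eta'$; hence $\ell=0$. The main obstacle of the whole argument is identifying the quantity $\psi$, after which the same scalar inequality yields the global existence bound and the quantitative lower bound in one stroke; every other step is either a direct computation or a standard ODE consideration.
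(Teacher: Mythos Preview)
Your proof is correct and follows essentially the same route as the paper. Both derive the scalar ODE $\eta''=C(\Omega)\dfrac{(\eta')^-}{\eta^3}-F$ (the paper's Lemma~\ref{Geq3}), use the energy $\tfrac12(\eta')^2+F\eta$ for the upper bounds, and obtain the $1/\sqrt{t}$ lower barrier by integrating the ODE once: your quantity $\psi=\eta'-\tfrac{C_0}{2\eta^2}$ with $\psi'\ge -F$ is exactly the paper's integrated identity $\eta'=\eta_1+\tfrac{C(\Omega)}{2\eta^2}-\tfrac{C(\Omega)}{2\eta_0^2}-Ft$ in Proposition~\ref{ml3}, rewritten as $\psi(t)=\psi(0)-Ft$ on the decreasing phase. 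The only differences are cosmetic: the paper splits on the sign of $\eta_1$ and records explicit constants, whereas you treat both cases at once via the inequality $\psi'\ge -F$; and you supply the short $\eta\to 0$ argument (via $\eta'\to 0$ and $\eta''\to -F$) that the paper leaves implicit.
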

\begin{remark}
The same result can be obtained for the corresponding one-dimensional problem.
\end{remark}

\section{Some preliminary results on the function $G$}
\label{s3}
\subsection{ Results for $h_0$ satisfying \eqref{hyp-base-h0} (all cases are included).}
In this subsection we proof some preliminary results on $G$ under the minimal hypothesis
\eqref{hyp-base-h0}.
Let $V_1$ be defined as follows
\begin{equation}
\label{V1}
V_1 = \sup_{x \in \Omega} \left\{ - \frac {\partial h_0} {\partial x_1}
(x) \right\}
\end{equation}
It is clear that $ V_1 \geq 0 $.
\begin{lemma}
\label{ineq-G-1}
$i)$ \quad There exists $c_1 > 0$ such that
$$
G(\beta, \gamma) \leq \frac {c_1} {\beta^3} - F \quad \quad
\forall \beta > 0, \; \gamma \geq 0.
  $$
$$
ii) \quad \quad \quad  G(\beta, \gamma) = -F
\quad \quad \forall \beta > 0, \;  \gamma \geq V_1.
  $$
\end{lemma}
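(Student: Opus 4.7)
The plan is to derive an energy identity for the variational inequality \eqref{7} by using the admissible test functions $\varphi = 2q$ and $\varphi = 0$ (both are in $K$ since $q \geq 0$), then combine with integration by parts, Poincar\'e's inequality, and the bound $(h_0 + \beta)^3 \geq \beta^3$.

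For part (i), I would proceed as follows. Choosing $\varphi = 2q \in K$ in \eqref{7} gives
\[
\int_\Omega (h_0+\beta)^3 |\nabla q|^2 \geq \int_\Omega h_0\, \partial_{x_1} q - \gamma \int_\Omega q,
\]
while $\varphi = 0 \in K$ yields the reverse inequality. Combining them,
\[
\int_\Omega (h_0+\beta)^3 |\nabla q|^2 = \int_\Omega h_0\, \partial_{x_1} q - \gamma \int_\Omega q.
\]
Since $q \in H^1_0(\Omega)$, integration by parts rewrites the right-hand side as $-\int_\Omega (\partial_{x_1} h_0)\, q - \gamma \int_\Omega q$. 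Using $q \geq 0$, $\gamma \geq 0$, and the definition \eqref{V1} of $V_1$, this is at most $V_1 \int_\Omega q$. Bounding the left-hand side from below by $\beta^3 \|\nabla q\|_{L^2}^2$ and the right-hand side from above using Cauchy--Schwarz and Poincar\'e's inequality ($\int_\Omega q \leq |\Omega|^{1/2} C_P \|\nabla q\|_{L^2}$), I arrive at
\[
\beta^3 \|\nabla q\|_{L^2}^2 \leq V_1\, |\Omega|^{1/2} C_P\, \|\nabla q\|_{L^2},
\]
which gives $\|\nabla q\|_{L^2} \leq C/\beta^3$ and therefore $\int_\Omega q \leq c_1/\beta^3$ for a constant $c_1$ depending only on $h_0$ and $\Omega$. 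The estimate for $G$ follows.

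For part (ii), I would simply verify that $q \equiv 0$ is the solution of \eqref{7} whenever $\gamma \geq V_1$. Substituting $q = 0$, the inequality becomes
\[
0 \geq \int_\Omega h_0\, \partial_{x_1} \varphi - \gamma \int_\Omega \varphi
= -\int_\Omega \left( \partial_{x_1} h_0 + \gamma \right)\varphi \qquad \forall \varphi \in K,
\]
after integrating by parts. Since $\gamma \geq V_1 \geq -\partial_{x_1} h_0$ pointwise, the integrand $(\partial_{x_1} h_0 + \gamma)\varphi$ is nonnegative for every $\varphi \geq 0$, so the inequality holds. By the uniqueness of the solution to \eqref{7}, we conclude $q \equiv 0$, hence $G(\beta, \gamma) = -F$.

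There is no real obstacle here: the whole argument is a standard energy/coercivity computation for the obstacle-type variational inequality, and the only thing to be slightly careful about is the sign bookkeeping after integration by parts and the use of $q \geq 0$ to discard the nonpositive contribution $-\gamma \int_\Omega q$. This lemma plays an auxiliary role, providing uniform a priori control on $G$ in terms of $\beta$, which will be used later to analyze the dynamics of $\eta$.
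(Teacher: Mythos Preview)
Your proof is correct and follows essentially the same route as the paper: the same energy identity from $\varphi=0,2q$, the same sign argument to drop $-\gamma\int_\Omega q$, and the same verification that $q\equiv 0$ solves \eqref{7} when $\gamma\geq V_1$. The only cosmetic difference is that in part (i) you integrate by parts first and bound via $V_1\int_\Omega q$, whereas the paper estimates $\int_\Omega h_0\,\partial_{x_1} q$ directly by Cauchy--Schwarz; either way one gets $\|\nabla q\|_{L^2}\leq C\beta^{-3}$ and hence $\int_\Omega q\leq c_1\beta^{-3}$.
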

\begin{proof}
$i)$ \ We take  $\varphi = 0$ and $\varphi = 2 q$ in
\eqref{7} to have
$$
\int_\Omega (h_0 + \beta)^3 | \nabla q |^2 =
\int_\Omega h_0 \frac {\partial q} {\partial x_1} - \gamma
\int_\Omega q.
  $$
We use the inequalities $ h_0 + \beta \geq \beta, \; \gamma
\geq 0, \; q \geq 0 $  to  obtain
$$
\beta^3 \int_\Omega | \nabla q |^2 \leq
\int_\Omega h_0 \frac {\partial q} {\partial x_1}.
  $$
We use the  Poincar\'e inequality and the proof of case i) ends.
\\
$ii)$ \
The inequality \eqref{7} can be written:
$$
\int_\Omega (h_0 + \beta)^3 \nabla q \cdot \nabla
(\varphi - q)  \geq -
\int_\Omega \left ( \frac {\partial h_0} {\partial x_1} + \gamma
\right ) (\varphi - q) \, dx, \quad \forall \varphi \in K.
  $$
Since
$$
\gamma + \frac {\partial h_0} {\partial x_1} \geq 0, \quad
\forall \; x \in \Omega
  $$
we have that  $ q = 0 $ which gives the result.
\end{proof}
The rest of the results enclosed in this section concern the function
$G(\beta, \gamma)$ when $\gamma \leq 0$.
We begin by a general result on variational inequalities.
\begin{lemma} \label{l1}
Let $a\in L^{\infty}(\Omega)$ such that  $\inf_\Omega a >0$. Let   $f \in H^{-1}(\Omega)$
and $q\in K$ be the solution of the problem
\begin{equation}
\int_{\Omega} a\nabla q \cdot \nabla (\varphi-q) \geq
\; <f, \varphi-q>,
\qquad \mbox{for all } \; \varphi \in K.  \label{11}
\end{equation}
Let $U \subset \Omega $ arbitrary and open, and let $r \in
H^{1}_0(U)$
the solution to
\begin{equation}
\int_{U} a\nabla r \cdot \nabla \psi = \; <f, \psi>, \qquad \mbox{for
all } \; \psi \in H_0^1(U).  \label{12}
\end{equation}
Then $q \geq r$ on $U$.
\end{lemma}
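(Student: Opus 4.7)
The plan is to prove $q \geq r$ on $U$ by a truncation/comparison argument, using $(r-q)^+$ as a test function in both formulations and then subtracting.

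First I would extend $r$ by zero to all of $\Omega$, obtaining $\tilde r \in H_0^1(\Omega)$ (this is standard since $r \in H_0^1(U)$). Set
$$
w := (\tilde r - q)^+.
$$
The key observation is that $w$ is supported in $U$: outside $U$ we have $\tilde r = 0$ and $q \geq 0$ (as $q \in K$), so $\tilde r - q \leq 0$ there and $(\tilde r - q)^+ = 0$. Moreover, on $\partial U$ one has $\tilde r = 0$, hence $w$ vanishes near $\partial U$ in the $H^1$ sense, so $w$ can be viewed both as an element of $H_0^1(\Omega)$ and (after restriction) of $H_0^1(U)$. Also $w \geq 0$, so $\varphi := q + w \in K$, with $\varphi - q = w$.

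Next I would plug $\varphi = q + w$ into \eqref{11} and $\psi = w$ into \eqref{12}. Since $\nabla w \equiv 0$ off $U$, all integrals over $\Omega$ involving $\nabla w$ reduce to integrals over $U$, and I obtain
$$
\int_U a\,\nabla q \cdot \nabla w \;\geq\; \langle f, w\rangle \;=\; \int_U a\,\nabla r \cdot \nabla w.
$$
Subtracting gives
$$
\int_U a\,\nabla(q-r)\cdot \nabla w \;\geq\; 0.
$$
Now I would use the classical identity $\nabla(r-q)^+ = \chi_{\{r > q\}}\nabla(r-q)$ to compute the left-hand side on the set where $w$ is nonzero:
$$
\int_U a\,\nabla(q-r)\cdot \nabla w
= -\int_{\{r > q\}\cap U} a\,|\nabla(r-q)|^2.
$$
Combining with the previous inequality and using $\inf_\Omega a > 0$, this forces
$$
\int_{\{r>q\}\cap U} a\,|\nabla(r-q)|^2 \;=\; 0,
$$
so $\nabla w = 0$ a.e.\ in $U$. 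Since $w \in H_0^1(U)$, Poincaré's inequality yields $w \equiv 0$, i.e.\ $(r-q)^+ = 0$ on $U$, which is exactly $q \geq r$ on $U$.

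The only delicate point is the bookkeeping in step one, namely justifying that $(r-q)^+$ (with $r$ extended by zero) genuinely lies in $H_0^1(U)$ so that it is an admissible test function in \eqref{12}, and that $q + (r-q)^+ \in K$ so that it is admissible in \eqml{11}; both follow from the fact that $q \geq 0$ forces $w$ to vanish outside $U$. The rest is a routine $T_k$-type/obstacle argument and presents no real difficulty.
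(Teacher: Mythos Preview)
Your proof is correct and follows essentially the same route as the paper: both test the variational inequality \eqref{11} with $\varphi = q + \psi$ for a nonnegative $\psi \in H_0^1(U)$ extended by zero, combine with \eqref{12}, and deduce $\int_U a\nabla(q-r)\cdot\nabla\psi \geq 0$. The only difference is that the paper stops here and invokes the weak maximum principle as a black box (using that $q-r \geq 0$ on $\partial U$), whereas you carry out that maximum-principle step explicitly by choosing the specific test function $\psi = (r-q)^+$; this is precisely the standard proof of the weak maximum principle, so the two arguments coincide.
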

\begin{proof} We consider $\psi\in H^{1}_0(U),$ $\psi \geq 0 $ arbitrary,
and we extend it to $\Omega$ by $0$ and denote the extended function by $\tilde{\psi}$ which belongs to $K$.
For simplicity we omit the tilde.
We take  $\varphi=q+
\psi$ in (\ref{11}) to obtain
\begin{equation}
\int_{U}a \nabla q \cdot \nabla \psi  \geq \; <f, \psi>, \label{13}
\end{equation}
Let us denote $\xi = q-r$. From (\ref{12}) and (\ref{13}) we have
\begin{equation}
\int_{U}a \nabla \xi \cdot \nabla \psi  \geq 0, \label{14}
\end{equation}
 for any $\psi \in H^1_0(U)$, $\psi\geq 0$. On the other hand
we have $\xi \geq 0$ on $\partial U$.
From the maximum principle we obtain $ \xi \geq 0 $ on $U$
which proves the lemma.
\end{proof}
The following result is a consequence of the above lemma and
non-negativity of
the solution $q$ to \eqref{7} in $\Omega$.
\begin{corollary}
\label{cor1}
Let us denote for any open set $U \subset \Omega$ and any $\beta >0$
by $q_{1 \beta}$ and $q_{2 \beta}$ the solutions to the following
problems
\begin{equation}
\left\{
\begin{array}{ll}
- \nabla \cdot [(h_0+ \beta)^3 \nabla q_{1 \beta}]=- \frac{\partial
h_0}{\partial x_1} & \mbox{ on } U \\
& \\
q_{1 \beta}=0, & \mbox{ on } \partial U
\end{array}
\right. \label{15}
\end{equation}
and
\begin{equation}
\left\{
\begin{array}{ll}
- \nabla \cdot [(h_0+ \beta)^3 \nabla q_{2 \beta}]=1 & \mbox{ on } U \\
& \\
q_{2 \beta}=0, & \mbox{ on } \partial U
\end{array}
\right. \label{16}
\end{equation}
respectively.
We then have
\begin{equation}
G(\beta, \gamma)\geq \int_{U}q_{1 \beta}dx- \gamma\int_{U}q_{2
\beta}dx-F. \label{17}
\end{equation}
for all $\beta>0$, $\gamma \in \R$ and  $U \subset \Omega $ open.
\end{corollary}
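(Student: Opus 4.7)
The plan is to apply Lemma \ref{l1} to the variational inequality \eqref{7}, with an auxiliary function $r$ built as a linear combination of $q_{1\beta}$ and $q_{2\beta}$.

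First, I would rewrite the variational inequality \eqref{7} so that it matches the abstract form \eqref{11} appearing in Lemma \ref{l1}. Since $\varphi - q \in H_0^1(\Omega)$, an integration by parts gives
$$
\int_\Omega h_0 \, \frac{\partial}{\partial x_1}(\varphi - q) \, dx
= - \int_\Omega \frac{\partial h_0}{\partial x_1}(\varphi - q) \, dx,
$$
so \eqref{7} becomes
$$
\int_\Omega (h_0 + \beta)^3 \nabla q \cdot \nabla(\varphi - q) \, dx
\geq \langle f, \varphi - q \rangle, \quad \forall \varphi \in K,
$$
with $a := (h_0 + \beta)^3$ and $\langle f, v \rangle := -\int_\Omega \left( \frac{\partial h_0}{\partial x_1} + \gamma \right) v \, dx$. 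Note $\inf_\Omega a \geq \beta^3 > 0$, so the hypotheses of Lemma \ref{l1} are satisfied.

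Next, I would introduce $r := q_{1\beta} - \gamma\, q_{2\beta}$ on $U$. By linearity of \eqref{15} and \eqref{16}, the function $r$ lies in $H_0^1(U)$ and is the unique weak solution of
$$
\int_U (h_0 + \beta)^3 \nabla r \cdot \nabla \psi \, dx
= -\int_U \left( \frac{\partial h_0}{\partial x_1} + \gamma \right) \psi \, dx
= \langle f, \psi \rangle, \quad \forall \psi \in H_0^1(U),
$$
which is exactly \eqref{12} for the same $a$ and $f$. Lemma \ref{l1} then yields $q \geq r = q_{1\beta} - \gamma\, q_{2\beta}$ on $U$.

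Finally, since $q \in K$, we have $q \geq 0$ on all of $\Omega$, so
$$
\int_\Omega q \, dx \geq \int_U q \, dx \geq \int_U q_{1\beta} \, dx - \gamma \int_U q_{2\beta} \, dx.
$$
Substituting into the definition $G(\beta,\gamma) = \int_\Omega q \, dx - F$ delivers \eqref{17}. The argument is essentially a direct assembly: no step is particularly delicate, and the only point requiring minimal care is the integration by parts used to cast \eqref{7} in the form demanded by Lemma \ref{l1} and the observation that $q \geq 0$ allows the enlargement of the integration domain from $U$ to $\Omega$.
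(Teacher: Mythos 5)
Your proof is correct and is precisely the argument the paper intends: the paper labels the corollary "a consequence of the above lemma and non-negativity of the solution $q$" and omits the details, which are exactly the integration by parts to put \eqref{7} in the form \eqref{11}, the identification $r = q_{1\beta} - \gamma q_{2\beta}$ by linearity, and the enlargement $\int_\Omega q \geq \int_U q$ using $q \geq 0$. No discrepancy to report.
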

\begin{remark}
The expressions $\int_{U}q_{1 \beta}dx$ and
$\int_{U}q_{2 \beta}dx$ represent the force
``$F_S$'' and  the damping  coefficient ``$d$'' respectively,  as we described
in the Introduction.
\label{rem3.1}
\end{remark}
\subsection{The case of non-horizontal slider.}
In this subsection we assume $h_0 \neq 0$ and also that $h_0$ satisfies the hypothesis of Cases I or II
(line contact and point contact case respectively).
We prove the existence of a sub-domain
$U \subset \Omega$ such that the averages of the corresponding functions
$q_{1 \beta}$ and $q_{2 \beta}$ are
``large'' in some sense when $\beta$ is small.
\\
We denote by $\rho \geq  0$ and $\theta \in [0, 2 \pi]$
the polar coordinates of $(x_1, x_2)$.
\begin{lemma} \label{l2}
\begin{itemize}
\item[a)] Case I. (Line contact) \newline
There exist $\delta, \beta_0,c_2>0$ and $B_{l, \beta}$ defined by
$$B_{l, \beta}:=]- 2 \beta^{1/\alpha}, -\beta^{1/\alpha}[
\; \times \;
]-\delta,\delta[
  $$
such that for any $0< \beta \leq \beta_0$ we have
$$\frac{\partial h_0}{\partial x_1} \leq
-c_2\beta^{1- 1/\alpha} \qquad \mbox{ on } B_{l, \beta}.
  $$
\item[b)] Case II. (Contact point) \newline
There exists $\theta_0 \in \, ]0, \frac{\pi}{2}[$,
 $c_2$, $\beta_0>0$ and the sector $B_{p, \beta}$ defined by
$$B_{p, \beta}= \{(x_1,x_2) \in \R^2; \ \beta^{1/\alpha}
\leq \rho \leq 2 \beta^{1/\alpha}; \ \pi - \theta_0\leq \theta \leq \pi +
\theta_0 \}
   $$
such that for any $0< \beta \leq \beta_0$:
$$\frac{\partial h_0}{\partial x_1} \leq -c_2\beta^{1-
1/\alpha} \quad \mbox{ on } B_{p, \beta}.
  $$
\end{itemize}
\end{lemma}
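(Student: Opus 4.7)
The idea in both cases is the same: differentiate the local factorization $h_0=|x_1|^{\alpha}h_1$ (resp.\ $h_0=|x|^{\alpha}h_1$) furnished by the hypothesis, read off the principal term of $\partial h_0/\partial x_1$, and check that on the proposed set this principal term is negative of size $\beta^{1-1/\alpha}$, while the remainder is of strictly smaller order in $\beta$.

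For \textbf{Case I}, differentiation inside $W$ gives
\begin{equation}
\nonumber
\frac{\partial h_0}{\partial x_1}(x_1,x_2)
= \alpha\,\mathrm{sgn}(x_1)\,|x_1|^{\alpha-1}h_1(x_1,x_2)
+ |x_1|^{\alpha}\frac{\partial h_1}{\partial x_1}(x_1,x_2).
\end{equation}
Since $h_1>0$ on $\overline{W}$, I pick $\delta>0$ small enough that the strip $]-\delta,\delta[\,\times\,]-\delta,\delta[$ lies in $W\cap\Omega$ and $h_1\geq c_0>0$ there, and I fix an upper bound $C_1$ for $|\partial h_1/\partial x_1|$ on the same strip. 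Choosing $\beta_0>0$ so that $2\beta_0^{1/\alpha}\leq\delta$, on $B_{l,\beta}$ one has $x_1<0$ and $|x_1|\in[\beta^{1/\alpha},2\beta^{1/\alpha}]$, hence
\begin{equation}
\nonumber
\frac{\partial h_0}{\partial x_1}
\leq -\alpha c_0\,\beta^{1-1/\alpha}+ C_1(2\beta^{1/\alpha})^{\alpha}
=\beta^{1-1/\alpha}\Bigl[-\alpha c_0 + 2^{\alpha}C_1\,\beta^{1/\alpha}\Bigr].
\end{equation}
Shrinking $\beta_0$ if necessary makes the bracket $\leq -\alpha c_0/2$, which yields the estimate with $c_2=\alpha c_0/2$.

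For \textbf{Case II} I use the identity $\partial\rho/\partial x_1=\cos\theta$; differentiating $h_0=\rho^{\alpha}h_1$ inside $W$ gives
\begin{equation}
\nonumber
\frac{\partial h_0}{\partial x_1}
=\alpha\rho^{\alpha-1}\cos\theta\;h_1
+\rho^{\alpha}\frac{\partial h_1}{\partial x_1}.
\end{equation}
Fix $\theta_0\in\,]0,\pi/2[$ arbitrarily (so that $\cos\theta\leq -\cos\theta_0<0$ on the angular sector), and again choose a ball $B_r(0)\subset W\cap\Omega$ on which $h_1\geq c_0>0$ and $|\partial h_1/\partial x_1|\leq C_1$. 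Taking $\beta_0$ with $2\beta_0^{1/\alpha}\leq r$, on $B_{p,\beta}$ we have $\rho\in[\beta^{1/\alpha},2\beta^{1/\alpha}]$ and
\begin{equation}
\nonumber
\frac{\partial h_0}{\partial x_1}
\leq -\alpha c_0\cos\theta_0\,\beta^{1-1/\alpha}
+2^{\alpha}C_1\,\beta,
\end{equation}
and the same factorization argument as above produces $c_2=\tfrac12\alpha c_0\cos\theta_0$ after shrinking $\beta_0$.

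The step that requires genuine care rather than bookkeeping is the choice of the geometric set itself: one must position $B_{l,\beta}$ (resp.\ $B_{p,\beta}$) on the side where $\mathrm{sgn}(x_1)<0$ (resp.\ $\cos\theta<0$) so that the leading term of $\partial h_0/\partial x_1$ is \emph{negative}, while keeping $|x_1|$ (resp.\ $\rho$) comparable to $\beta^{1/\alpha}$ so that its absolute size is exactly $\beta^{1-1/\alpha}$. Once that geometric placement is made, the dominance of the principal term over the remainder reduces to the elementary inequality $\beta\leq\beta_0^{1/\alpha}\cdot\beta^{1-1/\alpha}$, valid since $\alpha\geq 1$, and the conclusion follows.
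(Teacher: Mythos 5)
Your proof is correct and takes essentially the same approach as the paper: differentiate the local factorization $h_0=|x_1|^{\alpha}h_1$ (resp.\ $|x|^{\alpha}h_1$), identify the leading term of $\partial h_0/\partial x_1$ as negative of size $\beta^{1-1/\alpha}$ on the chosen rectangle or sector, and check that the remainder involving $\partial h_1/\partial x_1$ is of strictly higher order in $\beta$. The paper writes the comparison by factoring out $(-x_1)^{\alpha-1}h_1$ (resp.\ $|x|^{\alpha-1}h_1$) and observing that the resulting bracket converges to a negative constant, while you bound the two terms separately using $h_1\geq c_0$ and $|\partial h_1/\partial x_1|\leq C_1$; this is a cosmetic rather than substantive difference.
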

\begin{proof}
\begin{itemize}
\item[a)] We have for $x_1\leq 0$
$$\frac{\partial h_0}{\partial x_1}=- \alpha (-
x_1)^{\alpha-1}h_1+(-x_1)^{\alpha} \frac{\partial h_1}{\partial x_1}
= (-x_1)^{\alpha-1}h_1(x)\left[- \alpha-x_1\frac{\frac{\partial
h_1}{\partial x_1}}{h_1(x)} \right].
$$
Since $h_1>0$ on ${\bar W}$ we obtain  $$x_1 \frac{\frac{\partial
h_1}{\partial x_1}}{h_1(x)} \longrightarrow 0 \quad \mbox{ when } x
\longrightarrow 0$$ and the result is obvious.
\item[b)] For any $x$ in $W-\{0\}$ we have
$$\frac{\partial h_0}{\partial x_1}=\alpha |x|^{\alpha-1}
\frac{x_1}{|x|}h_1+|x|^{\alpha}\frac{\partial h_1}{\partial
x_1}= |x|^{\alpha-1}h_1\left(\alpha \frac{x_1}{|x|}+|x|\frac{\frac{\partial
h_1}{\partial x_1}}{h_1}\right).
$$
Now we can chose $\theta_0 \in ]0, \frac{\pi}{2}[$ such that $
\frac{x_1}{|x|}<-\frac{1}{2}$ if $\pi- \theta_0 \leq \theta \leq
\pi +\theta_0$ (choose for example $\theta_0=\frac{\pi}{6}$). On the
other hand we have $$|x|\frac{\frac{\partial h_1}{\partial
x_1}}{h_1} \longrightarrow 0 \qquad \mbox{ when } \quad x
\longrightarrow 0$$ which proves the lemma.
\end{itemize}
\end{proof}
\begin{lemma} \label{l3}
Let us consider $q_{1 \beta}, \; q_{2 \beta}$ the solutions to   (\ref{15})-(\ref{16})
where $U$ is given by
$$
U:=
B_{l, \beta} \quad \mbox{ in case I,}
   $$
$$
U:= B_{p, \beta} \quad \mbox{ in case II,}
  $$
with $B_{l, \beta}$ and $B_{p, \beta}$ defined in Lemma \ref{l2}.
Then there exists $\beta_0, \; c_3, \; c_4 >0$ such that for
any $\beta \in ]0,  \beta_0]$ we obtain
\begin{equation}
\left\{
\begin{array}{ll}
\int_{B_{l,  \beta}}q_{1 \beta}(x)dx \geq c_3
\beta^{2\left(1/\alpha - 1\right)} & \mbox{ in case I}
\\
\int_{B_{p,  \beta}}q_{1 \beta}(x)dx \geq c_3
\beta^{3/\alpha - 2} & \mbox{ in case II}
\end{array}
\right.
\label{19}
\end{equation}
 moreover
\begin{equation}
\left\{
\begin{array}{ll}
\int_{B_{l,  \beta}}q_{2 \beta}(x)dx \geq c_4
\beta^{3\left(1/\alpha - 1\right)} & \mbox{ in case I}
\\
\int_{B_{p,  \beta}}q_{2 \beta}(x)dx \geq c_4
\beta^{4/\alpha - 3} & \mbox{ in case II}.
\end{array}
\right.
\label{18}
\end{equation}
\end{lemma}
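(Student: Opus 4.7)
My approach starts with a simple duality identity that ties the two estimates together. Testing the weak formulation of \eqref{15} with $q_{2\beta}$ and that of \eqref{16} with $q_{1\beta}$ (both being admissible test functions in $H_0^1(U)$), the common left-hand side $\int_U (h_0+\beta)^3\nabla q_{1\beta}\cdot\nabla q_{2\beta}\,dx$ yields
\[
\int_U q_{1\beta}\,dx \;=\; \int_U\left(-\frac{\partial h_0}{\partial x_1}\right)q_{2\beta}\,dx.
\]
Since $q_{2\beta}\geq 0$ by the maximum principle and Lemma~\ref{l2} ensures $-\partial h_0/\partial x_1 \geq c_2\beta^{1-1/\alpha}$ on $U$ for both geometries, this gives
\[
\int_U q_{1\beta}\,dx \;\geq\; c_2\,\beta^{1-1/\alpha}\int_U q_{2\beta}\,dx.
\]
Multiplying the exponent $3(1/\alpha-1)$ by $\beta^{1-1/\alpha}$ produces $\beta^{2(1/\alpha-1)}$ in Case~I; similarly $\beta^{4/\alpha-3}$ becomes $\beta^{3/\alpha-2}$ in Case~II. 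Hence \eqref{19} follows at once from \eqref{18}, and I would concentrate on the lower bound for $\int_U q_{2\beta}$.

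For $\int_U q_{2\beta}$ I would exploit the variational characterization: $q_{2\beta}$ minimizes
\[
J_2(\varphi)=\tfrac12\int_U(h_0+\beta)^3|\nabla\varphi|^2\,dx-\int_U\varphi\,dx
\]
over $H_0^1(U)$, and testing the weak formulation with $\varphi = q_{2\beta}$ gives $J_2(q_{2\beta})=-\tfrac12\int_U q_{2\beta}\,dx$. Consequently, for every $\varphi\in H_0^1(U)$,
\[
\int_U q_{2\beta}\,dx \;\geq\; 2\int_U\varphi\,dx\;-\;\int_U(h_0+\beta)^3|\nabla\varphi|^2\,dx.
\]
I would insert an explicit $\varphi = A\,\psi$ with a fixed bump profile $\psi$ adapted to $U$ and optimize over the scalar $A>0$. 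A recurring pointwise bound is that on $U$ one has $|x|\leq 2\beta^{1/\alpha}$ (respectively $|x_1|\leq 2\beta^{1/\alpha}$), so $h_0=O(\beta)$ and therefore $(h_0+\beta)^3\leq C\beta^3$.

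In Case~I, $U=B_{l,\beta}=(-2\beta^{1/\alpha},-\beta^{1/\alpha})\times(-\delta,\delta)$ is a thin rectangle of width $\beta^{1/\alpha}$ and fixed height $2\delta$. I would take $\psi(x_1,x_2)=\psi_1(x_1)\psi_2(x_2)$ with $\psi_1$ a rescaled parabolic bump on the first interval ($|\psi_1|\leq 1$, $|\psi_1'|=O(\beta^{-1/\alpha})$) and $\psi_2$ a fixed bump on $(-\delta,\delta)$. A direct calculation gives $\int_U\psi\,dx\asymp\beta^{1/\alpha}$ and, using $(h_0+\beta)^3\leq C\beta^3$, $\int_U(h_0+\beta)^3|\nabla\psi|^2\,dx\leq C'\beta^{3-1/\alpha}$ (the $x_2$-derivative contributes only a lower-order $\beta^{3+1/\alpha}$ term). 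Maximizing $2cA\beta^{1/\alpha}-C'A^2\beta^{3-1/\alpha}$ in $A$ selects $A\asymp\beta^{2/\alpha-3}$ and yields $\int_U q_{2\beta}\,dx\geq c_4\beta^{3/\alpha-3}$. In Case~II, on the annular sector $B_{p,\beta}$, I would use $\psi(\rho,\theta)=\psi_1(\rho)\psi_2(\theta)$ with $\psi_1$ a bump on $(\beta^{1/\alpha},2\beta^{1/\alpha})$ and $\psi_2$ a fixed bump on $(\pi-\theta_0,\pi+\theta_0)$. Both coordinate directions now have size $\beta^{1/\alpha}$, so $|\nabla\psi|\leq C\beta^{-1/\alpha}$ and the area is $\asymp\beta^{2/\alpha}$; therefore $\int_U\psi\asymp\beta^{2/\alpha}$ and $\int_U(h_0+\beta)^3|\nabla\psi|^2\leq C\beta^{3}$. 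Optimization again gives $A\asymp\beta^{2/\alpha-3}$ and $\int_U q_{2\beta}\geq c_4\beta^{4/\alpha-3}$.

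The main technical nuisance, rather than a conceptual obstacle, is the anisotropic bookkeeping of scales in Case~I: one must check that the $x_2$-derivative contribution to the energy is strictly of lower order than the $x_1$-derivative one, so that the quadratic optimization in $A$ selects the announced powers of $\beta$ and no spurious cross terms survive. Once the $q_{2\beta}$ estimate is secured, the duality identity transfers it to $q_{1\beta}$ at no further cost.
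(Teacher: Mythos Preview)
Your proposal is correct and follows essentially the same route as the paper: the same rescaled bump test functions on $B_{l,\beta}$ and $B_{p,\beta}$, the same scaling $(h_0+\beta)^3\leq C\beta^3$ on $U$, and the same reduction of \eqref{19} to \eqref{18} via Lemma~\ref{l2} and the nonnegativity of $q_{2\beta}$. The only cosmetic differences are that the paper reaches the lower bound $\int_U q_{2\beta}\geq (\int_U\varphi)^2/\int_U(h_0+\beta)^3|\nabla\varphi|^2$ directly by Cauchy--Schwarz (your optimization in $A$ reproduces exactly this ratio), and it deduces \eqref{19} from the pointwise comparison $q_{1\beta}\geq c_2\beta^{1-1/\alpha}q_{2\beta}$ via the maximum principle rather than your symmetric-testing identity $\int_U q_{1\beta}=\int_U(-\partial_{x_1}h_0)\,q_{2\beta}$.
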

\begin{proof} From (\ref{16}) we deduce
\begin{equation}
\int_{U} q_{2 \beta}dx =\int_{U}(h_0+ \beta)^3|\nabla q_{2
\beta}|^2dx \label{20}.
\end{equation}
From the equality $$\int_{U}(h_0+ \beta)^3\nabla q_{2 \beta}
\cdot \nabla \varphi =\int_U \varphi, \qquad \mbox{ for all } \varphi \in
H^{1}_0(U)$$ and Cauchy-Schwarz inequality, we get
\begin{equation}
\int_{U}(h_0+ \beta)^3|\nabla q_{2 \beta}|^2 dx \geq \sup_{\varphi\in
H^1_0(U), \ \varphi \neq 0 } \left[ \frac{ \left(\int_U \varphi
dx \right)^2}{\int_U(h_0+\beta)^3 |\nabla \varphi|^2} \right].
 \label{21}
\end{equation}
It  suffices to find appropriate test functions $\varphi \in
H^1_0(U)$, $\varphi \neq 0$ such that the term
$$\frac{ ( \int_U\varphi
dx)^2 }{\int_U(h_0+\beta)^3 |\nabla \varphi|^2} $$ is large enough.

{\bf Proof of (\ref{18})}
\\
{\bf Case I: Line contact.}
 We choose $$\varphi(x_1, x_2)=\psi_1\left( \frac{x_1}{\beta^{1/\alpha}} \right)
 \psi_2(x_2)$$
with $\psi_1 \in {\mathcal D}( ]-2, -1[ )$,
$\psi_1 \geq 0$, $\psi_1 \not \equiv 0$ and
$ \psi_2 \in D ( ]- \delta_2, \delta_2[ ), \; \psi_2 \geq 0, \;
\psi_2 \not \equiv 0 $.
Then
$$\int_{B_{l, \beta}} \varphi \, dx =
\int_{-2\beta^{1/\alpha}}^{-\beta^{1/\alpha}}\int_{-
\delta_2}^{\delta_2} \psi_1\left(\frac{x_1}{\beta^{1/\alpha}}\right)
\psi_2(x_2)dx_1 dx_2=$$
$$ \beta^{1/\alpha} \int_{-2}^{-1}
\psi_1(y_1)dy_1\int_{- \delta_2}^{\delta_2} \psi_2(x_2)dx_2
  $$
and
$$
\int_{B_{l, \beta}} (h_0+\beta)^3 |\nabla \varphi|^2dx =$$
$$
\int_{-2\beta^{1/\alpha}}^{-\beta^{1/\alpha}}\int_{-
\delta_2}^{\delta_2} (h_0+ \beta)^3
\left[
\frac{1}{\beta^{2/\alpha}}|\psi^{\prime}_1\left(\frac{x_1}
{\beta^{1/\alpha}}\right)
\psi_2(x_2)|^2+ |\psi_1\left(\frac{x_1}{\beta^{1/\alpha}}\right)
\psi_2^{\prime}(x_2)|^2dx
\right].
$$
It is easy to show that
$$\int_{B_{l, \beta}}(h_0+ \beta)^3|\nabla \varphi|^2dx \leq
c_2'\beta^{3- 1/\alpha}
  $$
where $c_2'>0$ is  a constant independent on $\beta$.
 (\ref{21}) implies
$$\int_{B_{l, \beta}}(h_0+\beta)^3|\nabla q_{2 \beta}|^2dx \geq
c_3 \beta^{3(1/\alpha - 1)}
  $$
and thanks to (\ref{20}) we obtain (\ref{18})$_1$.
\\
{\bf Case II: Contact point.}
We choose  $\varphi(x_1, x_2)= \psi_3(\frac{\rho}
{\beta^{1/\alpha}}) \psi_4(\theta)$ with $\psi_3 \in
{\mathcal D}(]1, 2[)$,
$\psi_3 \geq 0$, $\psi_3 \not \equiv 0$,
and
$\psi_{4} \in {\mathcal D}(]\pi- \theta_0, \pi+ \theta_0[),
$ $ \psi_4 \geq 0$, $\psi_4 \not \equiv 0$. In
polar coordinates, we have
$$
\int_{B_{p, \beta}}\varphi \, dx=
\int_{\beta^{1/\alpha}}^{2\beta^{1/\alpha}} \int_{\pi - \theta_0}^
{\pi + \theta_0}
\psi_3\left(\frac{\rho}{\beta^{1/\alpha}}\right)\psi_4(\theta)
\rho d \rho d\theta=$$
$$
\beta^{2/\alpha}
\int_1^{2}\psi_3(\rho_1)\rho_1 d \rho_1 \int_{\pi - \theta_0}^{\pi +
  \theta_0} \psi_4(\theta) \, d \theta
$$
and
$$
\int_{B_{p, \beta}}(h_0+ \beta)^3|\nabla \varphi|^2dx=$$
$$
\int_{ B_{p, \beta}} (h_0+ \beta)^3\left[
\frac{1}{\beta^{2/\alpha}}\left| \psi_3^{\prime}
\left(\frac{\rho}{\beta^{1/\alpha}}\right)   \right|^2
|\psi_4(\theta)|^2 + \frac 1 {\rho^2} \left| \psi_3
\left(\frac{\rho}{\beta^{1/\alpha}}\right)   \right|^2
|\psi^{\prime}_4(\theta)|^2
\right].
 $$
We easily obtain
%
$$
\int_{B_{p, \beta}}(h_0+\beta)^3|\nabla \varphi|^2 dx \leq c_2'\beta^3,$$
where $c_2'>0$ is a constant independent of $\beta$.
From (\ref{21}) we obtain
$$\int_{B_{p, \beta}}(h_0+ \beta)^3|\nabla q_{2 \beta}|^2dx \geq c_3
\beta^{4/\alpha-3},
  $$
and by (\ref{20}) we get (\ref{18})$_2$.
\\
{\bf Proof of (\ref{19})}
\\
From lemma \ref{l2} we have
$$- \frac{\partial h_0}{\partial x_1} \geq c_2 \beta^{1- 1/\alpha} \qquad
\mbox{ on }
B_{l, \beta} \qquad \mbox{(in case I)}$$
and
$$- \frac{\partial h_0}{\partial x_1} \geq c_2
\beta^{1- 1/\alpha} \qquad
\mbox{ on }
B_{p, \beta} \qquad \mbox{(in case II)}.$$
By maximum principle we deduce the inequality
$$q_{1\beta} \geq c_2 \beta^{1- 1/\alpha}q_{2 \beta},
 \qquad \mbox{ on } B_{l, \beta} \qquad (\mbox{respectively  } B_{p, \beta}).$$
By (\ref{18}) the proof ends.
\end{proof}
The following corollary is a consequence of Corollary \ref{cor1} and Lemma \ref{l3}.
\begin{corollary}
For any $ \beta \in \; ]0, \beta_0] $ and
$ \gamma \leq 0 $ we have
$$
G(\beta, \gamma) \geq c_3 \beta^{2(1/\alpha -1)}-\gamma
c_4 \beta^{3(1/\alpha-1)}-F,
\quad \mbox{ in case I}$$
or
$$G(\beta, \gamma) \geq c_3 \beta^{3/\alpha -2} - \gamma
c_4 \beta^{4/\alpha -3}-F,
\quad \mbox{ in case II}.$$
with $\beta_0, c_3, c_4$ as in Lemma \ref{l3}.
\label{c2}
\end{corollary}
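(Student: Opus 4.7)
The plan is to combine Corollary~\ref{cor1} and Lemma~\ref{l3} in a direct manner. First I would apply Corollary~\ref{cor1} with the specific choice $U = B_{l,\beta}$ in Case~I and $U = B_{p,\beta}$ in Case~II. Both sets lie inside $\Omega$ for $\beta$ sufficiently small (the same $\beta_0$ produced in Lemmas~\ref{l2} and~\ref{l3} works, up to shrinking if necessary), so (\ref{17}) yields the starting estimate
$$G(\beta,\gamma) \;\geq\; \int_U q_{1\beta}\,dx \;-\; \gamma \int_U q_{2\beta}\,dx \;-\; F.$$

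The next step is to exploit the sign hypothesis $\gamma \leq 0$. The function $q_{2\beta}$ solves (\ref{16}) with strictly positive right-hand side and homogeneous Dirichlet boundary data, so by the weak maximum principle $q_{2\beta} \geq 0$ on $U$, hence $\int_U q_{2\beta}\,dx \geq 0$. Because $-\gamma \geq 0$, the damping contribution $-\gamma \int_U q_{2\beta}\,dx$ is non-negative, and we may replace $\int_U q_{2\beta}\,dx$ by the lower bound furnished by (\ref{18}): in Case~I this gives $-\gamma\int_U q_{2\beta}\,dx \geq -\gamma\, c_4 \beta^{3(1/\alpha-1)}$, and in Case~II it gives $-\gamma\int_U q_{2\beta}\,dx \geq -\gamma\, c_4 \beta^{4/\alpha-3}$.

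Finally I would insert the ``spring'' lower bound (\ref{19}) for $\int_U q_{1\beta}\,dx$ and add the two estimates, which immediately produces the two claimed inequalities. There is essentially no analytical obstacle beyond what has already been established: the only subtle point is the sign bookkeeping, since the step ``lower bound on $\int_U q_{2\beta}$ implies lower bound on $-\gamma\int_U q_{2\beta}$'' requires precisely $-\gamma \geq 0$. This is why the assumption $\gamma \leq 0$ is essential, and it is exactly the regime (the slider moves downward, $\eta' \leq 0$) in which a quantitative lower bound on $G$ is both meaningful and later useful for the global existence argument.
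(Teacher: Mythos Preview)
Your proposal is correct and follows exactly the route the paper indicates: it simply says the corollary is a consequence of Corollary~\ref{cor1} and Lemma~\ref{l3}, and you have spelled out precisely this combination, including the sign check $-\gamma \geq 0$ needed to pass from the lower bounds \eqref{18} on $\int_U q_{2\beta}$ to a lower bound on $-\gamma\int_U q_{2\beta}$.
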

\section{Proof of the main results}
\label{s4}
We consider $\eta(t)$ the solution of the Cauchy problem \eqref{6} defined on the maximal interval
$[0, T[$.
\subsection{Bounds  on $\eta$
for the non-horizontal slider case.}
In this subsection we assume  that $h_0$ satisfies the hypothesis of Cases I or II
(line contact and point contact case respectively).
We prove that $\eta$  and $\eta^{\prime}$ are bounded and $\eta$ reminds ``far" from 0.
We first prove in Proposition \ref{m-1} that
$\eta'$ admits an upper bound and the same for $\eta$ in Proposition \ref{m-2}.
These results are needed  to prove the existence of lower bounds for $\eta^{\prime}$ (Propostion \ref{m-3})
and $\eta$ (Proposition \ref{m-4})
\\
Let $V_2$ be defined by
\begin{equation}
V_2 = \max \{\eta_1 + 1, V_1\}
\end{equation}
for $V_1$ as in  \eqref{V1}, then we have:
\begin{proposition}
\label{m-1}
$$
\eta'(t) < V_2 \quad \quad \forall t \in [0, T[.
  $$
\end{proposition}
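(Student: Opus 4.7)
The plan is to argue by contradiction, exploiting the fact that once $\eta'$ reaches the threshold $V_1$, Lemma~\ref{ineq-G-1}(ii) forces the acceleration $\eta''=G(\eta,\eta')$ to equal $-F<0$, so $\eta'$ cannot keep increasing beyond $V_1$. Since $V_2=\max\{\eta_1+1,V_1\}\geq V_1$ and, by the $+1$ margin, we automatically have $\eta'(0)=\eta_1<V_2$, the hypothesis of the proposition is exactly what is needed to make this barrier argument work.

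Concretely, suppose for contradiction that there exists $t^*\in[0,T[$ with $\eta'(t^*)\geq V_2$. Since $\eta'\in C^1([0,T[)$ and $\eta'(0)=\eta_1<V_2$, the set
$$
S=\{\,t\in[0,T[\,:\;\eta'(t)\geq V_2\,\}
$$
is nonempty and closed in $[0,T[$; let $t_0:=\inf S>0$. By continuity $\eta'(t_0)=V_2$ and $\eta'(\tau)<V_2$ for every $\tau\in[0,t_0)$.

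At $t_0$ we have $\eta'(t_0)=V_2\geq V_1$, so Lemma~\ref{ineq-G-1}(ii) yields
$$
\eta''(t_0)=G(\eta(t_0),\eta'(t_0))=-F<0.
$$
On the other hand, $\eta'$ attains its maximum on the interval $[0,t_0]$ at the right endpoint $t_0$, hence the left derivative satisfies
$$
\eta''(t_0)=\lim_{\tau\to t_0^-}\frac{\eta'(\tau)-\eta'(t_0)}{\tau-t_0}\geq 0,
$$
since for $\tau<t_0$ the numerator is $\leq 0$ and the denominator is $<0$. This contradicts $\eta''(t_0)=-F<0$ and finishes the proof.

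There is no real obstacle here; the argument is entirely a soft continuity/monotonicity barrier argument. The one subtle point is to notice that the definition $V_2=\max\{\eta_1+1,V_1\}$ (with the strict $+1$ slack over $\eta_1$) is what guarantees the initial strict inequality $\eta'(0)<V_2$, so that the first-crossing time $t_0$ is positive and the contradiction between $\eta''(t_0)=-F$ and the one-sided derivative bound $\eta''(t_0)\geq 0$ can be invoked.
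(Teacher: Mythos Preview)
Your proof is correct and follows essentially the same approach as the paper's: a first-crossing-time argument combined with Lemma~\ref{ineq-G-1}(ii) to produce the contradiction $\eta''(t_0)=-F<0$ versus $\eta''(t_0)\geq 0$. The paper presents the same idea in a more compressed form, but your version spells out carefully why the first crossing time exists (via $\eta'(0)=\eta_1<V_2$) and why the one-sided derivative bound holds.
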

\begin{proof}
We argue by the contrary and assume that $t_1 > 0 $ is the first point  such that
$
\eta'(t_1) = V_2,
  $ \
which implies
$
\eta''(t_1) \geq 0
  $
which contradicts Lemma \ref{ineq-G-1}  $ii)$ where
$
\eta''(t_1)  = -F.
   $
\end{proof}
 We introduce two energies
$ E_1, E_2 : ]0, + \infty [ \times \R \rightarrow \R  $  defined  by
%
$$
E_1(\beta, \gamma)   = \frac 1 2 \gamma^2 + F \beta
   $$
and
$$
E_2(\beta, \gamma)   =  \frac 1 2 \gamma^2 + F \beta + \frac {c_1} {2 \beta^2}
   $$
%
for $c_1$ as in Lemma \ref{ineq-G-1}.
\\
The energies $E_1$ and $E_2$ are used in the  following lemma when  $\eta(t)$ is
non-increasing or
non-decreasing respectively.
\begin{lemma}
\label{l3-1}
For any $ t \in [0, T[ $ we have
$$
i) \quad \quad \frac d {dt} E_1(\eta(t), \eta'(t)) \leq 0 \quad \text{ if } \quad \eta'(t) \leq 0
   $$
$$
ii) \quad \quad \frac d {dt} E_2(\eta(t), \eta'(t)) \geq 0 \quad \text{ if } \quad \eta'(t) \geq 0.
   $$
\end{lemma}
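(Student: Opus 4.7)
The plan is to prove both inequalities by direct computation: differentiate $E_i(\eta(t),\eta'(t))$ via the chain rule, substitute the ODE $\eta''(t)=G(\eta(t),\eta'(t))$ from \eqref{6}, and then read off the sign of the resulting expression using the bounds on $G$ supplied by Lemma \ref{ineq-G-1} together with the sign hypothesis on $\eta'(t)$.

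For part $i)$, I would compute
\[
\frac{d}{dt}E_1(\eta(t),\eta'(t)) = F\,\eta'(t) + \eta'(t)\,\eta''(t) = \eta'(t)\bigl(F + G(\eta(t),\eta'(t))\bigr).
\]
Since by definition $G(\beta,\gamma)=\int_\Omega q\,dx - F$ where $q\in K$, the nonnegativity $q\geq 0$ built into the convex set $K$ immediately gives $F + G(\eta(t),\eta'(t)) = \int_\Omega q\,dx \geq 0$. Combining with the hypothesis $\eta'(t)\leq 0$ yields the desired sign. This part uses no refined bound on $G$; only the nonnegativity of the pressure enters.

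For part $ii)$, the analogous computation is
\[
\frac{d}{dt}E_2(\eta(t),\eta'(t)) = \eta'(t)\left(F - \frac{c_1}{\eta(t)^3} + G(\eta(t),\eta'(t))\right).
\]
Here the hypothesis $\eta'(t)\geq 0$ is exactly what allows us to apply Lemma \ref{ineq-G-1} $i)$, giving $G(\eta(t),\eta'(t)) \leq c_1/\eta(t)^3 - F$, which controls the sign of the bracket; multiplying by $\eta'(t)\geq 0$ concludes. The whole argument is essentially two chain-rule computations paired with the two parts of Lemma \ref{ineq-G-1} (nonnegativity of $q$ for $E_1$, and the explicit upper bound for $E_2$).

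There is no real obstacle here: the only care required is matching each sign hypothesis on $\eta'$ with the appropriate inequality on $G$ so that the product $\eta'\cdot(\text{bracket})$ comes out with the claimed sign. The construction of $E_1$ and $E_2$ is precisely tailored so that the $F$ and $c_1/(2\beta^2)$ potential terms cancel against the available bounds on $G$ upon differentiation, leaving only a factor with a definite sign.
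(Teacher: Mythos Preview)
Your approach is essentially identical to the paper's: differentiate, substitute $\eta''=G(\eta,\eta')$, and use the nonnegativity of $q$ for part~$i)$ and Lemma~\ref{ineq-G-1}~$i)$ for part~$ii)$. One remark worth making explicit: your computation in $ii)$ (and the paper's own proof) actually produces $\frac{d}{dt}E_2 \leq 0$, not $\geq 0$ as the lemma is stated, since the bracket $F - c_1/\eta^3 + G$ is nonpositive and is multiplied by $\eta'\geq 0$; this is a sign typo in the statement, and the non-increase of $E_2$ is precisely what is invoked downstream in Proposition~\ref{m-2}.
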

\begin{proof}
$i)$ \ We multiplying the equation
$$
\eta'' + F = G(\eta, \eta') + F
  $$
by $\eta'$ and use the inequality  $ G(\eta, \eta') + F \geq 0 $ to obtain the result.
\\
$ii)$ \ From Lemma \ref{ineq-G-1} $i)$ we have
$$
\eta'' -  \frac {c_1} {\eta^3} + F \leq 0.
   $$
We multiply  by $\eta'$ to end the proof.
\end{proof}
Let $D_1$ and $D_2$ be defined by
$$
D_1 := \left ( \frac {c_1}{F} \right )^{1/3}
  $$
and
$$
D_2 := 2 \max \left \{ \eta_0, D_1,  \frac 1 F \left ( \frac 1 2
\eta_1^2 + F \eta_0 +
\frac {c_1} {2 \eta_0^2} \right ) ,  \frac 1 F \left (
\frac 1 2 V_2^2 + F D_1 +
\frac {c_1} {2 D_1^2} \right )  \right \}.
  $$
\begin{proposition}
\label{m-2}
$$
\eta(t) < D_2 \quad  \quad \forall t \in [0, T[.
  $$
\end{proposition}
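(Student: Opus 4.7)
The plan is to argue by contradiction. Suppose $t^*\in (0,T[$ is the first time at which $\eta(t^*)=D_2$; such a $t^*$ exists by continuity (since $D_2>2\eta_0$) and is strictly positive. The aim is to bound $E_2(\eta(t^*),\eta'(t^*))$ from above, and then use the trivial estimate $F\beta\le E_2(\beta,\gamma)$ to conclude that $FD_2$ is no larger than a quantity which, by the very definition of $D_2$, is at most $FD_2/2$. The key tool is the monotonicity of $E_2$ along intervals on which $\eta'\ge 0$, furnished by the proof of Lemma \ref{l3-1}$(ii)$: multiplying $\eta''+F-c_1/\eta^3\le 0$ by $\eta'\ge 0$ gives $dE_2/dt\le 0$ on such intervals.

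I would split into two cases. In Case A, $\eta'(t)\ge 0$ throughout $[0,t^*]$; then applying the monotonicity of $E_2$ on $[0,t^*]$ yields $FD_2\le E_2(\eta(t^*),\eta'(t^*))\le E_2(\eta_0,\eta_1)$, which by the definition of $D_2$ is bounded by $FD_2/2$, a contradiction. In Case B, there is $s\in[0,t^*]$ with $\eta'(s)<0$; set $s^*:=\sup\{s\in[0,t^*]:\eta'(s)\le 0\}$. One must first rule out $s^*=t^*$: by continuity this would force $\eta'(t^*)\le 0$, and since $t^*$ is the first hit of $D_2$ from below, also $\eta'(t^*)\ge 0$, so $\eta'(t^*)=0$. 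Then Lemma \ref{ineq-G-1}$(i)$ combined with $D_2\ge 2D_1$ gives $\eta''(t^*)\le c_1/D_2^3-F<0$, so $\eta$ would be strictly decreasing just after $t^*$, contradicting that $t^*$ is the first hit of $D_2$. Hence $s^*<t^*$; by continuity $\eta'(s^*)=0$ and $\eta'(s)>0$ on $(s^*,t^*]$. The right-derivative inequality $\eta''(s^*)\ge 0$ combined with Lemma \ref{ineq-G-1}$(i)$ then gives $\eta(s^*)\le D_1$.

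To close Case B, I would sub-divide. If $\eta(s^*)=D_1$, the monotonicity of $E_2$ on $[s^*,t^*]$ gives $E_2(\eta(t^*),\eta'(t^*))\le E_2(D_1,0)=FD_1+c_1/(2D_1^2)$. If $\eta(s^*)<D_1$, the intermediate value theorem provides a first $\tau\in (s^*,t^*)$ with $\eta(\tau)=D_1$; applying the monotonicity on $[\tau,t^*]$ (where $\eta'>0$) together with the bound $\eta'(\tau)<V_2$ from Proposition \ref{m-1} yields $E_2(\eta(t^*),\eta'(t^*))\le \frac{1}{2}V_2^2+FD_1+c_1/(2D_1^2)$. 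In either sub-case, combining with $FD_2\le E_2(\eta(t^*),\eta'(t^*))$ and the inequality $FD_2\ge V_2^2+2FD_1+c_1/D_1^2$ supplied by the definition of $D_2$ produces $FD_2\le FD_2/2$, a contradiction.

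The main obstacle is that, although $s^*$ is the natural starting point of the last increasing phase before $t^*$, the value $\eta(s^*)$ is only constrained by $\eta(s^*)\le D_1$ and may be arbitrarily small, so $E_2(\eta(s^*),0)=F\eta(s^*)+c_1/(2\eta(s^*)^2)$ is a priori unbounded and cannot serve as initial datum for the monotonicity estimate. The trick that resolves this is to shift the reference point from $s^*$ to the first crossing $\tau$ of the level $\{\eta=D_1\}$ within the increasing phase: at $\tau$ the singular term $c_1/(2\eta^2)$ is capped by $c_1/(2D_1^2)$ and the kinetic contribution is bounded by $V_2^2/2$ thanks to Proposition \ref{m-1}, after which the monotonicity of $E_2$ on $[\tau,t^*]$ transmits the bound forward to $t^*$.
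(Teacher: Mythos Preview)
Your argument follows essentially the same route as the paper's: contradiction at the first hit of $D_2$, a split into ``always non-decreasing'' versus ``last zero of $\eta'$ before $t^*$'', use of Lemma~\ref{ineq-G-1}\,$(i)$ to force $\eta\le D_1$ at that last zero, and then passage to a crossing time of the level $D_1$ so that the $E_2$-estimate can be initiated with controlled data. Your Case~A is the paper's Option~1, your $s^*$ is the paper's $t_1$ in Option~2, and your $\tau$ is the paper's $t_2$.

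There is, however, one step that is misargued. When ruling out $s^*=t^*$ you derive $\eta'(t^*)=0$ and $\eta''(t^*)<0$ and then write ``so $\eta$ would be strictly decreasing just after $t^*$, contradicting that $t^*$ is the first hit of $D_2$.'' That is not a contradiction: $\eta$ dropping below $D_2$ immediately after $t^*$ is perfectly compatible with $t^*$ being the first hit. The correct contradiction lies on the other side of $t^*$: from $\eta'(t^*)=0$ and $\eta''(t^*)<0$ one obtains $\eta'(t)>0$ on some interval $(t^*-\delta,t^*)$, and this is incompatible with $s^*=\sup\{s\in[0,t^*]:\eta'(s)\le 0\}=t^*$, since that closed set would then have to contain points arbitrarily close to $t^*$ from the left. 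With this fix (which is precisely how the paper justifies the existence of its interval $[t_1,t_3]$ of non-negativity of $\eta'$), your proof is correct and coincides with the paper's.
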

\begin{proof}
By  the contrary we assume that $t_3 > 0 $ is the first time such that
\begin{equation}
\label{et3}
\eta(t_3) = D_2.
\end{equation}
Then, it results that
$ \eta'(t_3) > 0 $ or $ \eta'(t_3) = 0 $ . In the last case,
since $ \eta(t_3) > D_1 $, Lemma \ref{ineq-G-1} $i)$ implies
$$
\eta''(t_3) = G(\eta(t_3), 0) < 0.
  $$
\\
So, in both cases, since $ \eta \in C^2$, there exists
$ t_1 \in [0, t_3[ $ such that
$$
\eta'(t) \geq 0, \quad \forall t \in [t_1, t_3],
  $$
where $t_1$ is the smallest number with this
property.
Two options concerning $t_1$ are possible:
\\
{\bf Option 1:} $ t_1 = 0$.
\ In this case, we have
$$
\eta'(t) \geq 0, \quad \forall t \in [0, t_3].
  $$
From Lemma \ref{l3-1} $ii)$ we obtain
$$
E_2(\eta(t_3), \eta'(t_3)) \leq E_2(\eta_0, \eta_1)
  $$
which implies
$$
\eta(t_3) \leq \frac 1 F \left ( \frac 1 2 \eta_1^2 + F \eta_0 +
\frac {c_1} {2 \eta_0^2}  \right )
  $$
and contradicts \eqref{et3}.
\\
{\bf Option 2:} $ t_1 \in \, ]0, t_3[ $.
\\
We have in this case
$$
\eta'(t_1) = 0, \; \;  \eta'(t) \geq 0 \quad \forall t \in [t_1, t_3]
  $$
which implies
$$
\eta''(t_1) \geq 0 .
  $$
From Lemma \ref{ineq-G-1} $i)$  we obtain
$$
\frac {c_1} {\eta^3(t_1)} \geq F \quad \text { that is }
\quad \eta(t_1) \leq D_1.
   $$
Let $ t_2 \in [t_1, t_3] $ be a time such that
$$
\eta(t_2) = D_1.
   $$
From Lemma \ref{l3-1} $ii)$ and Proposition \ref{m-1}  we have
$$
E_2(\eta(t_3), \eta'(t_3)) \leq E_2(\eta(t_2), \eta'(t_2))
\leq \frac 1 2 V_2^2 + F D_1 +
\frac {c_1} {2 D_1^2}
   $$
which implies
$$
\eta(t_3) \leq \frac 1 F \left ( \frac 1 2 V_2^2 + F D_1 +
\frac {c_1} {2 D_1^2} \right )
   $$
and contradicts \eqref{et3} and the proof ends.
\end{proof}
We define $V_3$ as follows
\begin{equation}
\label{B5}
V_3 := \max \left\{ 1 - \eta_1 , \; 2 \sqrt{2 F D_2}, \;
2 \sqrt{\eta_1^2 + 2 F \eta_0} \right\}.
\end{equation}
\begin{proposition}
\label{m-3}
$$
\eta'(t) > - V_3, \quad \forall t \in [0, T[.
  $$
\end{proposition}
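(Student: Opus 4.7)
I would follow the same contradiction template that worked for Propositions \ref{m-1} and \ref{m-2}, but driven by the energy $E_1$ rather than $E_2$, because $E_1$ is the monotone quantity available on intervals where $\eta' \le 0$ (Lemma \ref{l3-1} $i$).

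More precisely, the plan is to suppose there exists $t \in [0,T[$ with $\eta'(t) \le -V_3$, and let $t_3$ be the first such time, so by continuity $\eta'(t_3) = -V_3$. The ingredient $V_3 \ge 1 - \eta_1$ in the definition of $V_3$ gives $-V_3 < \eta_1 = \eta'(0)$, hence $t_3 > 0$. Because $\eta'(t_3)<0$, by continuity there is a maximal half-open interval ending at $t_3$ on which $\eta' \le 0$; I would let $t_1 \in [0,t_3)$ be its left endpoint, so that $\eta' \le 0$ on $[t_1,t_3]$ and either $t_1 = 0$ or $\eta'(t_1) = 0$. On $[t_1,t_3]$, Lemma \ref{l3-1} $i$ then gives
$$
E_1(\eta(t_3), \eta'(t_3)) \le E_1(\eta(t_1), \eta'(t_1)).
$$

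In the case $t_1 = 0$, the right-hand side is $\tfrac12 \eta_1^2 + F \eta_0$; using $\eta(t_3) > 0$ on the left yields $\tfrac12 V_3^2 \le \tfrac12 \eta_1^2 + F \eta_0$, which is incompatible with the bound $V_3 \ge 2\sqrt{\eta_1^2 + 2F \eta_0}$ built into the definition \eqref{B5}. In the case $t_1 > 0$ we have $\eta'(t_1) = 0$, so the right-hand side reduces to $F\eta(t_1)$, and Proposition \ref{m-2} bounds this by $F D_2$; the resulting inequality $\tfrac12 V_3^2 \le F D_2$ is then contradicted by $V_3 \ge 2\sqrt{2 F D_2}$. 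Either way we reach a contradiction, concluding the proof.

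There is no serious obstacle here, since all three ``slots'' in the max defining $V_3$ are tailored to the three situations above (start-up, return after a rebound, and sign of $\eta_1$). The only mild subtlety is checking that $t_1$ is well defined and that $\eta'(t_1) = 0$ in the second case: this follows from the continuity of $\eta'$ together with the maximality of the interval on which $\eta' \le 0$, so I would state it briefly rather than dwell on it. The upper bound on $\eta$ from Proposition \ref{m-2} is essential in Case 2, which is why the results are proved in this specific order.
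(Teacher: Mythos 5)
Your proof is correct and follows essentially the same argument as the paper: argue by contradiction at the first time $\eta'$ reaches $-V_3$, locate the last preceding time $t_1$ at which $\eta'$ was nonnegative (or take $t_1=0$), apply the monotonicity of $E_1$ from Lemma \ref{l3-1}~$i)$ on $[t_1,t_3]$, and use Proposition \ref{m-2} in the second case together with the three ingredients built into the definition of $V_3$ to close the contradiction. The paper organizes the same two situations as ``Option I'' and ``Option II'' instead of via the left endpoint $t_1$ of the maximal interval with $\eta'\le 0$, but the content is identical.
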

\begin{proof}
We argue by the contrary and assume that $ t_2 \in ]0, T[ $ is the first time such that
\begin{equation}
\label{epb5}
\eta'(t_2) = - V_3.
\end{equation}
We have two options:
\\
{\bf Option I.} $ \eta'(t) \leq 0, \quad \forall t \in [0, t_2] $.
\\
From Lemma \ref{l3-1} $i)$ we have
$$
E_1(\eta(t_2), \eta'(t_2)) \leq E_1(\eta_0, \eta_1)
  $$
which implies
$$
\frac 1 2 | \eta'(t_2) |^2 \leq \frac 1 2 \eta_1^2 + F \eta_0
   $$
and  contradicts \eqref{epb5}.
\\
{\bf Option II:} There exists $ t_1 \in \; ]0, t_2[ $ such that
$$
\eta'(t_1) = 0 \quad \quad \text { and } \quad \eta'(t) \leq 0
\quad \forall t \in [t_1, t_2].
  $$
Then
$$
E_1(\eta(t_2), \eta'(t_2)) \leq E_1(\eta(t_1), 0)
  $$
which combined with  Proposition \ref{m-2} implies
$$
\frac 1 2  | \eta'(t_2) |^2 \leq F D_2
  $$
and  contradicts \eqref{epb5}.
\end{proof}
The most difficult part is to obtain a lower bound of $\eta$ (Proposition \ref{m-4}).
Before we remark that from Corollary \ref{c2} we have
\begin{equation}
\label{G-s1-s2}
G(\beta, \gamma) \geq c_3 \beta^{- s_1} - c_4 \gamma \beta^{-1 - s_2}
- F, \quad
\forall \beta \in ]0, \beta_0], \quad \forall \gamma \leq 0
\end{equation}
where  $ \beta_0, c_3, c_4 $ were defined in Lemma \ref{l3} and
\begin{equation}
\label{s-1}
s_1 =
\begin{cases}
2 \left ( 1 - \frac 1 \alpha \right )  \quad \text{in Case I (line contact)}
\\
2  - \frac 3 \alpha   \quad \quad \; \; \; \text{in Case II (point contact)}
\end{cases}
\end{equation}
\begin{equation}
\label{s-2}
s_2 =
\begin{cases}
2  - \frac 3 \alpha  \quad \text{in Case I}
\\
2  - \frac 4 \alpha   \quad \text{in Case II},
\end{cases}
\end{equation}
Notice that $s_1>0$ and $s_2 \geq 0$.
Let $D_3$ and $D_4>0$ be defined by
\begin{equation}
\label {B6}
D_3 := \frac 2 3 \min \left\{ \eta_0, \; \left (   \frac {c_3} F \right )^{1/s_1} \right\},
\end{equation}
\begin{equation}
\begin{cases}
D_4 = \frac 1 2 \min \{ \eta_0, \;  \beta_0, \;
\left (  \frac {c_3} F \right )^{1/s_1}, \;
\left ( D_3^{- s_2} + \frac {s_2} {c_4} V_3 \right )^{- 1/s_2} \}
\quad \text { if } \; s_2 > 0
\\
\text{ and }
\\
D_4 = \frac 1 2 \min \{ \eta_0, \;  \beta_0, \; \left (  \frac {c_3} F
\right )^{1/s_1}, \;
D_3 e^{-V_3/c_4} \}  \quad \text { if } \; s_2 = 0.
\end{cases}
\end{equation}

\begin{proposition}
\label{m-4}
Under assumption
\begin{equation}
\begin{cases}
\alpha \geq \frac 3 2  \quad \text{in Case I (line contact)}
\\
\text {or}
\\
\alpha \geq 2  \quad \text{in Case II (point contact)}
\end{cases}
\end{equation}
we have $ \eta(t) > D_4, \quad \forall t \in [0, T[ $.
\end{proposition}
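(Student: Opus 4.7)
My plan is to argue by contradiction. Suppose that $t^{*}\in \,]0,T[$ is the first time at which $\eta(t^{*})=D_{4}$, so $\eta(t)>D_{4}$ on $[0,t^{*})$ and necessarily $\eta'(t^{*})\leq 0$. The key ingredient is the lower bound \eqref{G-s1-s2}. On any interval on which $\eta(t)\leq\mu:=\min\{\beta_{0},(c_{3}/F)^{1/s_{1}}\}$ and $\eta'(t)\leq 0$, that estimate gives
\[
\eta''(t)+c_{4}\,\eta'(t)\,\eta(t)^{-1-s_{2}}\;\geq\;c_{3}\,\eta(t)^{-s_{1}}-F\;\geq\;0.
\]
Introducing the primitive $\Psi$ of $\beta\mapsto c_{4}\beta^{-1-s_{2}}$, namely $\Psi(\beta)=-\tfrac{c_{4}}{s_{2}}\beta^{-s_{2}}$ when $s_{2}>0$ and $\Psi(\beta)=c_{4}\ln\beta$ when $s_{2}=0$, this reads $\tfrac{d}{dt}\bigl[\eta'(t)+\Psi(\eta(t))\bigr]\geq 0$, and $\Psi$ is strictly increasing in both cases.

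The next step is to identify a time $\tau\in[0,t^{*})$ satisfying (i) $\eta(\tau)\geq D_{3}$, (ii) $\eta\leq\mu$ on $[\tau,t^{*}]$, and (iii) $\eta'\leq 0$ on $[\tau,t^{*}]$. A natural candidate is the last time before $t^{*}$ at which $\eta=D_{3}$, which exists since $\eta(0)=\eta_{0}>D_{3}$ and $\eta(t^{*})=D_{4}<D_{3}$. Property (ii) is then automatic from $D_{3}\leq\mu$, built into the way $D_{3}$ enters the constants. Property (iii) may however fail if $\eta$ oscillates between $D_{3}$ and $D_{4}$, in which case one must restart from the last local minimum on $[\tau,t^{*}]$ and iterate. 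This interval construction is where I expect the main technical obstacle to lie.

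Once (i)--(iii) hold, integrating the differential inequality on $[\tau,t^{*}]$ and using monotonicity of $\Psi$ yields
\[
\eta'(t^{*})\;\geq\;\eta'(\tau)+\bigl[\Psi(\eta(\tau))-\Psi(D_{4})\bigr]\;\geq\;\eta'(\tau)+\bigl[\Psi(D_{3})-\Psi(D_{4})\bigr].
\]
The definition of $D_{4}$ is calibrated precisely so that $\Psi(D_{3})-\Psi(D_{4})>V_{3}$: for $s_{2}>0$, the factor $1/2$ in front of $(D_{3}^{-s_{2}}+s_{2}V_{3}/c_{4})^{-1/s_{2}}$ gives $\tfrac{c_{4}}{s_{2}}(D_{4}^{-s_{2}}-D_{3}^{-s_{2}})>V_{3}$ strictly; for $s_{2}=0$, the factor $1/2$ in front of $D_{3}e^{-V_{3}/c_{4}}$ gives $c_{4}\ln(D_{3}/D_{4})>V_{3}$ strictly. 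Combining with $\eta'(\tau)>-V_{3}$ from Proposition \ref{m-3} produces $\eta'(t^{*})>0$, contradicting $\eta'(t^{*})\leq 0$. The hypotheses $\alpha\geq 3/2$ in Case I and $\alpha\geq 2$ in Case II enter precisely to guarantee $s_{2}\geq 0$, which is what makes the primitive $\Psi$ well defined and the comparison with the two possible forms of $D_{4}$ consistent.
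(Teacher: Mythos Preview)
Your overall strategy matches the paper's proof exactly: contradiction at the first time $t^{*}$ where $\eta=D_4$, choice of the last earlier time $\tau$ where $\eta=D_3$, the differential inequality $\eta''+c_4\eta'\eta^{-1-s_2}\geq 0$ on $[\tau,t^{*}]$, integration via the primitive $\Psi$, and the final comparison with $V_3$. The role of the hypothesis $\alpha\geq 3/2$ (resp.\ $\alpha\geq 2$) as ensuring $s_2\geq 0$ is also correctly identified.

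The gap is precisely where you flag it: you do not establish (iii), and your suggested fix (``restart from the last local minimum and iterate'') is both unnecessary and unworkable as stated --- restarting from a local minimum would give $\eta(\tau)<D_3$, destroying property (i), and an iteration yields no uniform control. In fact (iii) \emph{never} fails for the natural choice of $\tau$, and the paper disposes of it in three lines. On $(\tau,t^{*})$ one has $\eta\leq D_3$ (by the choice of $\tau$ as the \emph{last} crossing of $D_3$), hence $c_3\eta^{-s_1}>F$ throughout. If $\eta'>0$ somewhere in $(\tau,t^{*})$, let $\tau_1$ be the supremum of such times; then $\eta'(\tau_1)=0$ and $\eta''(\tau_1)\leq 0$. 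But \eqref{G-s1-s2} applied at $\gamma=0$ gives
\[
\eta''(\tau_1)=G(\eta(\tau_1),0)\;\geq\;c_3\,\eta(\tau_1)^{-s_1}-F\;>\;0,
\]
a contradiction. Thus $\eta'\leq 0$ on the whole interval $[\tau,t^{*}]$, and your integration argument then goes through verbatim. The point you missed is that the ``spring'' term $c_3\beta^{-s_1}$ in \eqref{G-s1-s2} (not just the damping term) is active in the region $\eta\leq D_3$ and rules out interior local maxima there.
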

\begin {proof}
By the contrary we assume  $t_2  \in \, ]0, T[  $ is the first time such that
\begin{equation}
\label{et2b7}
\eta(t_2) = D_4.
\end{equation}
Notice that $ \eta_0 > D_3 > D_4. $
Let  $t_1 \in ]0, t_2[  $ be the last point where
\begin{equation}
\label{et1b6}
\eta(t_1) = D_3.
\end{equation}
By definition of  $D_3$ we have:
\begin{equation}
\label{2-ineq}
\begin{cases}
\eta'(t_1) \leq 0, \quad   \eta'(t_2) \leq 0
\\
c_3 \eta(t)^{- s_1} > F, \quad \forall t \in [t_1, t_2].
\end{cases}
\end{equation}
We first see
\begin{equation}
\label{etpinf0}
\eta'(t) \leq 0, \quad \forall t \in [t_1, t_2].
\end{equation}
Suppose that \eqref{etpinf0} is false, then there exists $\tau \in ]t_1, t_2[ $ such that
%
$$
\eta'(\tau) > 0.
  $$
%
Let  $\tau_1$ be the supremum of  $\tau \in ]t_1, t_2[ $ satisfying  $\eta'(\tau) > 0.$
It is clear that $ \tau_1 < t_2 $ and it is a local maximum of $\eta$ which implies
\begin{equation}
\label{a1}
\begin{cases}
\eta'(\tau_1) = 0
\\
\eta''(\tau_1) \leq 0.
\end{cases}
\end{equation}
Then  from \eqref{G-s1-s2} and \eqref{2-ineq}
we have
$$
\eta''(\tau_1)  = G(\eta(\tau_1), 0) \geq \frac {c_3} {\eta(\tau_1)^{s_1}} - F > 0
  $$
which  contradicts (\ref{a1}).
Then \eqref{etpinf0} is proved.
\\
Combining \eqref{G-s1-s2} and \eqref{2-ineq} we deduce
\begin{equation}
\label{eppg}
\eta''\geq - c_4 \eta' \eta^{-1 - s_2}  \quad \text{ on } \;  [t_1, t_2].
\end{equation}
{\bf Case $i)$:} \ \ $ s_2 > 0 $.
\\
We integrate \eqref{eppg} to deduce
$$
\eta'(t) \geq \eta'(t_1) + \frac {c_4}{s_2} \eta(t)^{- s_2}  - \frac {c_4}{s_2} \eta(t_1)^{- s_2}, \quad
\forall t \in [t_1, t_2]
  $$
and thanks to  \eqref{etpinf0} and Proposition \ref{m-3} applied
for $t = t_1$ we obtain
$$
\frac {c_4}{s_2} \eta(t_2)^{- s_2} \leq \frac {c_4}{s_2}
\eta(t_1)^{- s_2} + V_3.
  $$
Since $\eta(t_1) = D_3$ it results
$$
\eta(t_2) \geq \left ( D_3^{-s_2} + \frac {s_2} {c_4} V_3 \right )^{- 1/s_2}
  $$
which contradicts \eqref{et2b7}.
\\
{\bf Case $ii)$:}  $ s_2 = 0$.
\\
We integrate \eqref{eppg} to obtain
$$
\eta'(t) \geq \eta'(t_1) +  c_4 \log \left ( \frac 1 {\eta(t)} \right )
-  c_4 \log \left ( \frac 1 {\eta(t_1)} \right ) ,  \quad \forall t \in [t_1, t_2]
  $$
which implies
$$
c_4 \log \left ( \frac 1 {\eta(t_2)} \right )  \leq c_4 \log \left (
  \frac 1 {\eta(t_1)} \right )  + V_3.
  $$
Then
$$
\eta(t_2) \geq D_3 e^{- V_3/c_4}
  $$
which  contradicts \eqref{et2b7}.
\end{proof}
\subsection{Bounds on $\eta$ for the flat case}
We consider the case $h_0 \equiv 0$.
\\
Let us introduce the auxiliary function $w$ defined as the unique solution
to the problem
\begin{equation}
\label{def-w}
\begin{cases}
- \Delta w = 1 \quad & \text{ in } \; \Omega
\\
w = 0  \quad & \text{ on  } \; \partial \Omega
\end{cases}
\end{equation}
and define the constant $C(\Omega)$ by
$$
C(\Omega) = \int_\Omega w(x) \, dx.
  $$
By maximum principle we have $w >0$ on $\Omega$
which implies
$$
C(\Omega) > 0.
  $$
In the following, for any real number $z$ we   denote
$ z^+ = \max\{z, 0\} $ (positive part) and $ z^- = - \min\{z, 0\} $
(negative part).  We have the identity $ z = z^+ - z^- $.
\begin{lemma}
\label{Geq3}
$\eta$ satisfies the following differential equation
$$
\eta'' = C(\Omega) \frac {(\eta')^-} {\eta^3} - F.
  $$
\end{lemma}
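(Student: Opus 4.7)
The plan is to directly compute $\int_\Omega p \, dx$ (which equals $\int_\Omega q \, dx$ in the notation of \eqref{7}) as a function of $\eta$ and $\eta'$, and then substitute into Newton's second law \eqref{3}. Since $h_0 \equiv 0$, the variational inequality \eqref{7} with $\beta = \eta(t)$ and $\gamma = \eta'(t)$ simplifies dramatically: the term $\int_\Omega h_0 \partial_{x_1}(\varphi - q)$ vanishes, and the leading coefficient $(h_0 + \beta)^3$ reduces to the constant $\eta^3$. So I need to solve
$$
\int_\Omega \eta^3 \nabla q \cdot \nabla (\varphi - q) \geq - \eta' \int_\Omega (\varphi - q), \quad \forall \varphi \in K.
$$

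Next I would split on the sign of $\eta'$. If $\eta'(t) \geq 0$, plugging in the admissible test functions $\varphi = 0$ and $\varphi = 2q$ yields the two inequalities $\eta^3 \int_\Omega |\nabla q|^2 \leq -\eta' \int_\Omega q$ and $\eta^3 \int_\Omega |\nabla q|^2 \geq - \eta' \int_\Omega q$, forcing equality; but since the right-hand side is non-positive while the left-hand side is non-negative, both must vanish, so $q \equiv 0$ and $\int_\Omega q = 0 = C(\Omega)(\eta')^-/\eta^3$. If $\eta'(t) < 0$, I would set
$$
q := -\frac{\eta'}{\eta^3}\, w,
$$
with $w$ the solution of \eqref{def-w}. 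Because $w > 0$ in $\Omega$ (maximum principle) and $-\eta'/\eta^3 > 0$, we have $q \in K$, and a direct computation gives $-\nabla \cdot(\eta^3 \nabla q) = -\eta'$, i.e.\ $q$ solves the underlying linear equation with zero right-hand side for the constraint, hence satisfies the variational inequality. By uniqueness of the solution of \eqref{7} this is the $q$ we want, and
$$
\int_\Omega q \, dx = -\frac{\eta'}{\eta^3}\int_\Omega w\, dx = C(\Omega)\frac{(\eta')^-}{\eta^3}.
$$

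Combining the two cases, $\int_\Omega q = C(\Omega)(\eta')^-/\eta^3$ for every $t$, and substituting into Newton's law $\eta'' = \int_\Omega p - F$ gives the claimed identity. There is essentially no obstacle here: the only point requiring care is checking that the explicit candidate $q = -(\eta'/\eta^3)w$ indeed belongs to $K$ (which needs $\eta' \leq 0$) and verifying that the Euler equation for the constrained problem reduces to a linear PDE whenever the obstacle $q \geq 0$ is inactive, both of which are immediate from $w > 0$.
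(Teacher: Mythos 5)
Your proof is correct and follows essentially the same route as the paper: reduce the variational inequality \eqref{7} for $h_0\equiv 0$ to $\eta^3\int_\Omega\nabla q\cdot\nabla(\varphi-q)\geq-\eta'\int_\Omega(\varphi-q)$, then observe that the unique solution is $q=0$ when $\eta'\geq 0$ and $q=-(\eta'/\eta^3)w$ when $\eta'<0$, and substitute $\int_\Omega q$ into Newton's law. The paper simply states these two facts without justification, while you verify them by testing with $\varphi=0,2q$ and by checking that the explicit candidate lies in $K$ and solves the unconstrained Euler equation; this is just a fuller write-up of the same argument.
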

\begin{proof}

For $ h_0 \equiv 0 $ the inequality (1.7) becomes
\begin{equation}
\label{ineq-h0-0}
\beta^3 \int_\Omega \nabla q \cdot \nabla (\varphi - q)
\geq - \gamma \int_\Omega (\varphi - q), \quad \forall \, \varphi
\in K.
\end{equation}
The required result is a direct consequence of the following facts
\\
- if $ \gamma \geq 0 $ the solution of \eqref{ineq-h0-0} is $q = 0$
\\
- if $ \gamma < 0 $ the solution of \eqref{ineq-h0-0} is
$ q = - \frac {\gamma w} {\beta^3}$.
\end{proof}
%
%
%
%
%
The bounds on $\eta$ and $\eta'$ can be summarized in the following proposition
\begin{proposition}
\label{ml3}
The following inequalities are valid:
\\
{\bf I)} \ For $\eta_1 \leq 0 $ and $ t \in \, ]0, T[ $ we have
\\
\hspace*{2mm} {\bf Ia)} \quad $ - \sqrt{\eta_1^2 + 2 F \eta_0} \leq
\eta'(t) < 0 $
\\
\hspace*{2mm} {\bf Ib)} \quad $ \eta_0 \left [ \frac {C(\Omega)}
{C(\Omega) + 2 \eta_0^2 F t - 2 \eta_0^2 \eta_1  }
\right ]^{1/2}  \leq \eta(t) \leq \eta_0.$
\\
{\bf II)} \ For any $\eta_1 > 0$ we define $t_0 = \frac {\eta_1} F $
and ${\hat \eta}_0 = \eta_0 + \frac {\eta_1^2} {2 F} $, then
$t_0 < T$ and we have
\\
\hspace*{2mm} {\bf IIa)} \quad $ \eta(t) = - \frac 1 2 F t^2 + \eta_1
t + \eta_0 $ \ for \ $ t \in [0, t_0] $
\\
\hspace*{2mm} {\bf IIb)} \quad $ - \sqrt{2 F {\hat \eta_0}} \leq
\eta'(t) < 0 $ \ for \ $ t \in \, ]t_0, T[ $
\\
\hspace*{2mm} {\bf IIc)} \quad $ {\hat \eta_0} \left [ \frac {C(\Omega)}
{C(\Omega) + 2 {\hat \eta_0}^2 F (t - t_0)  }
\right ]^{1/2}  \leq \eta(t) \leq {\hat \eta_0}$
\ for \ $ t \in \, ]t_0, T[. $
\end{proposition}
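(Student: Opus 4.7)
The plan is to exploit the explicit ODE from Lemma \ref{Geq3},
\[
\eta'' = C(\Omega)\frac{(\eta')^-}{\eta^3} - F,
\]
and to split on the sign of $\eta_1$. The argument reduces to two ingredients: the energy estimate in the regime $\eta'\le 0$ (Lemma \ref{l3-1}\,i) applied with $h_0\equiv 0$), and the following identity, valid exactly on intervals where $\eta'\le 0$:
\[
\frac{d}{dt}\!\left[\frac{C(\Omega)}{2\eta^{2}(t)}\right]
= -\frac{C(\Omega)\,\eta'(t)}{\eta^{3}(t)}
= \eta''(t) + F.
\]

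I would first handle the ``rising'' phase in Case II. For $\eta_1>0$, let $\tau\in(0,T)$ be maximal such that $\eta'\ge 0$ on $[0,\tau]$; on this interval $(\eta')^-=0$ so the ODE reduces to $\eta''=-F$, whence $\eta'(t)=\eta_1-Ft$ and $\eta(t)=\eta_0+\eta_1 t-\tfrac{F}{2}t^{2}$. This formula first yields $\eta'=0$ at $t_0=\eta_1/F$, proving IIa and $\eta(t_0)=\hat\eta_0$. Since $\eta\ge\eta_0>0$ on $[0,t_0]$, local existence (Theorem \ref{t1}) forbids $T\le t_0$, so $t_0<T$.

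Next I would show that on the ``falling'' regime---all of $(0,T)$ in Case I and $(t_0,T)$ in Case II---one has $\eta'(t)<0$. At $t=0$ (if $\eta_1=0$) or at $t=t_0$ in Case II, the ODE gives $\eta''=-F<0$, pushing $\eta'$ strictly negative just after the starting time. For a contradiction, suppose $t^\ast$ is the first subsequent instant with $\eta'(t^\ast)=0$; then $\eta'$ arrives at $0$ from below, so $\eta''(t^\ast)\ge 0$, while the ODE with $(\eta'(t^\ast))^-=0$ forces $\eta''(t^\ast)=-F<0$, a contradiction. Hence $\eta$ is strictly decreasing on the falling regime, yielding the upper bounds $\eta(t)\le\eta_0$ in Ib and $\eta(t)\le\hat\eta_0$ in IIc.

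The velocity bounds Ia and IIb then follow from Lemma \ref{l3-1}\,i): since $E_1(\eta,\eta')=\tfrac12(\eta')^2+F\eta$ is non-increasing while $\eta'\le 0$, we obtain $\tfrac12(\eta'(t))^2\le\tfrac12\eta_1^{2}+F\eta_0-F\eta(t)$ in Case I (giving Ia) and $\tfrac12(\eta'(t))^2\le F\hat\eta_0-F\eta(t)$ in Case II (giving IIb), using $\eta>0$. For the lower bounds Ib and IIc, I would integrate the key identity from $t_\ast\in\{0,t_0\}$ to $t$, obtaining
\[
\frac{C(\Omega)}{2\eta^{2}(t)}-\frac{C(\Omega)}{2\eta^{2}(t_\ast)}
= \eta'(t)-\eta'(t_\ast) + F(t-t_\ast),
\]
then drop the nonpositive term $\eta'(t)$ on the right and invert. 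In Case I ($t_\ast=0$, $\eta'(0)=\eta_1\le 0$) this yields Ib; in Case II ($t_\ast=t_0$, $\eta'(t_0)=0$) it yields IIc. The only delicate step, and what I would call the main obstacle, is ruling out re-entry of $\eta'$ to zero in the falling regime, since the integration identity collapses as soon as $\eta'$ becomes positive; the contradiction argument above is what secures this.
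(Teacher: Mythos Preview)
Your proposal is correct and follows essentially the same approach as the paper: establish $\eta'<0$ on the falling regime by the contradiction argument with $\eta''(t^\ast)=-F$, use the energy $E_1=\tfrac12(\eta')^2+F\eta$ for the velocity bounds, and integrate the ODE (equivalently, your identity for $\tfrac{d}{dt}[C(\Omega)/(2\eta^2)]$) to get the lower bound on $\eta$. The only cosmetic difference is that you invoke Lemma~\ref{l3-1}\,i) for the energy step, whereas the paper re-derives it by multiplying the equation by $\eta'$; your treatment of the case $\eta_1<0$ is in fact slightly cleaner than the paper's, which writes ``$\eta''(0)=-F$'' even when $\eta_1<0$ (where continuity of $\eta'$ already suffices).
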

\begin{proof}
{\bf I)} \ We assume that $ \eta_1 \leq 0$.
Then, Lemma \ref{Geq3} implies $\eta''(0) = -F$ and therefore
there exists
 a point $t_1 \in \, ]0, T] $ such that
$$
\eta'(t) < 0, \quad \forall \, t \in \, ]0, t_1[,
  $$
where  $t_1$ denotes the largest element with this
property.
We now prove
\begin{equation}
\label{t1eT}
\eta'(t) < 0, \quad \forall \, t \in \,  ]0, T[,
\end{equation}
which is equivalent to assert $ t_1 = T $.
\\
In order to prove (\ref{t1eT}) we argue by contradiction and assume
$ t_1 < T $
and $\eta'(t_1) = 0$ which implies \
$ \eta''(t_1) \geq 0 $ \ and  contradicts \ $ \eta''(t_1) = -F $ \ which is obtained from Lemma
\ref{Geq3} and
proves \eqref{t1eT}.  \eqref{t1eT} implies
\begin{equation}
\label{eqeta-fc}
\eta'' = - C(\Omega) \frac {\eta'} {\eta^3} - F \quad \text{ on }
\; \; [0, T[.
\end{equation}
We multiply \label{eqeta-fc} by $\eta'$ to obtain that
$ \frac 1 2 (\eta')^2 + F \eta $ is a non-increasing function
on  $[0, T]$. This completes the proof of the
double inequality in {\bf Ia)}.
\\
Since $\eta$ is a non-increasing function, we deduce  the inequality of the right-hand side of {\bf Ib)}.
Now we integrate  \eqref{eqeta-fc} over $ [0, t[ $ to obtain
$$
\eta' = \eta_1 + \frac {C(\Omega)} {2 \eta^2}
- \frac {C(\Omega)} {2 \eta_0^2} -  F t.
  $$
Thanks to $\eta' < 0 $ of {\bf Ia)} we obtain
$$
\frac {C(\Omega)} {2 \eta^2} <
\frac {C(\Omega)} {2 \eta_0^2} + F t - \eta_1
\quad \text { on } \; \; ]0, T[
  $$
which completes the proof of {\bf Ib)}.
\\
{\bf II)} we assume that $\eta_1 > 0$. Then, Lemma
\ref{Geq3} implies
$ \eta'' = - F $ for  $t \in [0, t_0]$ which proves
{\bf IIa)}. Since $\eta'(t_0) = 0$ and $\eta(t_0) = {\hat \eta}_0$
the proofs of {\bf IIb)} and {\bf IIc)} are similar to the proofs
of {\bf Ia)} and  {\bf Ib)}  respectively.

\end{proof}
\subsection{Proofs of the theorems}
{\it Proof of Theorem \ref{G-lipsch}}.
Let us fix $ \beta > 0 $ and $ \gamma \in \R $ and take
$\tilde{\beta} > 0$ and $\tilde{\gamma} \in \R $ such that
$ (\tilde{\beta},\tilde{\gamma}  ) $ are close enough to $(\beta,
\gamma )$.
We denote by $\tilde{q} \in K$ the solution to the Reynolds inequality
\begin{equation}
\nonumber
\displaystyle
\int_\Omega\left (h_0 + \tilde{\beta} \right )^3 \nabla
\tilde{q} \cdot \nabla
(\varphi - \tilde{q}) \geq \int_\Omega h_0
\dfrac{\partial }{\partial x_1} (\varphi-\tilde{q}) - \tilde{\gamma}
\int_\Omega (\varphi-\tilde{q}), \quad \forall \varphi \in K
\end{equation}
which can be written in the form
\begin{equation}
\displaystyle
\begin{split}
\int_\Omega\left (h_0 + \beta \right )^3 \nabla
\tilde{q} \cdot \nabla
(\varphi - \tilde{q})  & \geq \int_\Omega h_0
\dfrac{\partial }{\partial x_1} (\varphi-\tilde{q}) - \tilde{\gamma}
\int_\Omega (\varphi-\tilde{q}) +
\\
& + (\beta - \tilde{\beta})
\int_\Omega A_{\beta, \tilde{\beta}} (x) \nabla \tilde{q} \cdot
\nabla ( \varphi - \tilde{q} )
\end{split}
\label{7-pr}
\end{equation}
where $ A_{\beta, \tilde{\beta}} $ is uniformly bounded in
$ \tilde{\beta} $.
\\
We take $ \varphi = \tilde{q} $ in \eqref{7}, $ \varphi = q $ in
\eqref{7-pr} and we add both inequalities to get
\begin{equation}
\nonumber
\displaystyle
\begin{split}
\int_\Omega\left (h_0 + \beta \right )^3 | \nabla ( \tilde{q} - q )
|^2 & \leq | \tilde{\gamma} - \gamma | | \Omega |^{1/2}
\| \tilde{q} - q \|_{L^2(\Omega)} +
\\
& + | \tilde{\beta} - \beta |
\| A_{\beta, \tilde{\beta}} \|_{L^\infty(\Omega)}
\| \nabla \tilde{q} \|_{L^2(\Omega)} \| \nabla
(\tilde{q} - q) \|_{L^2(\Omega)}.
\end{split}
\end{equation}
By  Poincar\'e inequality  the proof ends.
\\
{\it Proof of Theorem \ref{th-ex-station}}.
\\
Let us introduce the function $ g : ]0, + \infty [ \rightarrow \R $ defined by
$$
g(\beta) = G(\beta, 0).
   $$
From Theorem \ref{t1}, it is clear that $g$ is continuous.
Lemma \ref{ineq-G-1} $i)$ and Corollary \ref{c2} imply
$$
\lim_{\beta \rightarrow + \infty} g(\beta) = - F \quad \text{ and }
\quad  \lim_{\beta \rightarrow 0} g(\beta) = + \infty
   $$
which proves the theorem.
\\
{\it Proof of Theorem \ref{th-ev}}.
\\
The result is a consequence of Propositions
\ref{m-1}, \ref{m-2}, \ref{m-3} and \ref{m-4}.
\\
{\it Proof of Theorem \ref{th-ev-fc}}.
\\
The non-existence of stationary solutions comes  from Lemma \ref{Geq3}.
The results concerning the evolution on the position follow from  Proposition \ref{ml3}.
%
%

\end{document}